\theoremstyle{definition}
\newtheorem{theorem}{Theorem}
\newtheorem{definition}[theorem]{Definition}
\newtheorem{lemma}[theorem]{Lemma}
\newtheorem{proposition}[theorem]{Proposition}
\newtheorem{remark}{Remark}
\title{Infinitely many solutions for doubly critical variable-order $p(x)$-Choquard-Kirchhoff type equations via the concentration compactness method
}
\author{Masaki Sakuma\thanks{Email: masakisakuma0110@gmail.com}}
\affil{Graduate School of Mathematical Sciences,\\ The University of Tokyo, Meguro-ku, Tokyo, Japan}
\begin{document}
\maketitle
\begin{abstract}
In this paper, we prove the existence of infinitely many solutions of a doubly critical Choquard-Kirchhoff type equation
\begin{equation*}
\begin{split}
&M(\mathcal{E}[u])(-\Delta)_{p(\cdot,\cdot)}^{s(\cdot,\cdot)}u+V(x)|u|^{p(x)-2}u-\varepsilon_W W(x)|u|^{q_W(x)-2}u \\
&\quad\quad=\left(\int_{\mathbb{R}^N} \frac{|u(y)|^{r(y)}}{|x-y|^{\alpha(x,y)}}dy\right) |u|^{r(x)-2}u + |u|^{q_c(x)-2}u\quad\text{in $\mathbb{R}^N$}
\end{split}
\end{equation*}
with variable smoothness. The exponents $r(\cdot)$ and $q_c(\cdot)$ are allowed to have critical growths at the same time in the sense of Hardy-Littlewood-Sobolev inequality and Sobolev inequality, respectively. In the course of consideration, we formulate a new Sobolev type embedding theorem for the Slobodeckij spaces with variable smoothness and extend the concentration compactness lemma to the variable exponent case in the form involving some nonlocal quantities. To obtain a sequence of solutions, we use a critical point theorem based on Krasnoselskii's genus. 

\vspace{1ex}\par
{\flushleft{\textbf{Keywords:} Choquard equations; Variable exponents; Variational Methods; Critical growth; Krasnoselskii's genus; Concentration compactness method}}
{\flushleft{\textbf{MSC2020:} 35J92; 35A15; 35B33; 35R11; 35A01}}
\end{abstract}
\section{Introduction}
In this paper, we consider a type of critical variable-order $p(x)$-Choquard-Kirchhoff equations of the form
\begin{equation}\label{cho}
\begin{split}
&M(\mathcal{E}[u])(-\Delta)_{p(\cdot,\cdot)}^{s(\cdot,\cdot)}u+V(x)|u|^{p(x)-2}u-\varepsilon_W W(x)|u|^{q_W(x)-2}u \\
&\quad\quad=\int_{\mathbb{R}^N} \frac{|u(x)|^{r(x)-2} |u(y)|^{r(y)}u(x)}{|x-y|^{\alpha(x,y)}}dy+ |u|^{q_c(x)-2}u\quad\text{in $\mathbb{R}^N$},
\end{split}
\end{equation}
where $N\geq 1$ is a natural number, $(-\Delta)_{p(\cdot,\cdot)}^{s(\cdot,\cdot)}$ is the variable-order $p(x)$-Laplace operator given by the Cauchy principal value
\[
(-\Delta)_{p(\cdot,\cdot)}^{s(\cdot,\cdot)}u(x)\coloneqq \lim_{\varepsilon\to +0}\int_{\mathbb{R}^N\setminus B_\varepsilon (x)}\frac{|u(x)-u(y)|^{p(x,y)-2}(u(x)-u(y))}{|x-y|^{N+s(x,y)p(x,y)}}dy
\]
and $\mathcal{E}$ is the corresponding energy functional
\[
\mathcal{E}[u]\coloneqq \int_{\mathbb{R}^N} \int_{\mathbb{R}^N}\frac{|u(x)-u(y)|^{p(x,y)}}{p(x,y) |x-y|^{N+s(x,y)p(x,y)}}dxdy
\]
defined on the Sobolev space $W^{s(x,y),p(x,y)}(\mathbb{R}^N)$ with variable order and variable exponent. We will deal with various variable-exponent function spaces, the specific definitions of which are given in the next section. Note that in the case of variable-order Sobolev spaces, the smoothness exponent and the integrability exponent have two argument. Some literature adopts the notation $s(\cdot,\cdot)$ and $p(\cdot,\cdot)$ to clarify this. In this paper, we write $s(x,y)$ and $p(x,y)$ using variables $x,y\in\mathbb{R}^N$ explicitly. However, we also consider exponents with one argument $x\in\mathbb{R}^N$ corresponding to the diagonals and write $p(x)=p(x,x)$, $s(x)=s(x,x)$ in an abuse of notation unless there is confusion. 
In addition, throughout this paper, $B_r(x)$ denotes the open ball with radius $r$ centered at $x$ in $\mathbb{R}^N$; $C$, $C'$, $C_i$ and $C_i'$ represent various positive constants; $\|\cdot\|_q$ denotes the $L^q$ norm; $f_{+}$ [resp. $f_{-}$] denotes the positive part [resp. negative part] of a function $f$; $\chi_A$ denotes the indicator function of $A$. \par

We denote $\Phi^{+}=\sup\Phi$ and $\Phi^{-}=\inf\Phi$ for various functions on various domains. We say a function $\Psi:\mathbb{R}^N\times \mathbb{R}^N\to\mathbb{R}$ is \textit{symmetric} if and only if $\Psi(x,y)=\Psi(y,x)$ for any $x,y\in \mathbb{R}^N$. The exponents that appear in the equation \eqref{cho} satisfy the following conditions:
\begin{enumerate}
\item[(S)] $s:\mathbb{R}^N\times \mathbb{R}^N\to\mathbb{R}$ is a symmetric and uniformly continuous function with $0<s^{-}\leq s^{+}<1$.
\item[(P)] $p:\mathbb{R}^N\times \mathbb{R}^N\to\mathbb{R}$ is a symmetric and uniformly continuous function with $1<p^{-}\leq p^{+}<N/s^{+}$. 
\item[(QC)] $q_c:\mathbb{R}^N\to\mathbb{R}$ is a uniformly continuous function with $p^{+}<q_c^{-}\leq q_c(x)\leq p_s^*(x)$ ($x\in\mathbb{R}^N$) and has critical growth, i.e., $\{x \in\mathbb{R}^N\mid q_c(x)=p_s^*(x)\}\neq\emptyset$, and the continuous embedding $W^{s(x,y),p(x,y)}(\mathbb{R}^N)\hookrightarrow L^{q_c(x)}(\mathbb{R}^N)$ holds, where $p_s^*(x)$ denotes the critical Sobolev exponent $\displaystyle \frac{p(x)N}{N-s(x)p(x)}$. 
\end{enumerate}
$q_c$ essentially plays the role of a substitute for the critical Sobolev exponent $p_s^*$. In the case of constant exponents, there is no problem in letting $q_c\equiv p_s^*$. On the other hand, in the case of variable exponents, the situation is complicated because the critical Sobolev embedding $W^{s(x,y),p(x,y)}(\mathbb{R}^N)\hookrightarrow L^{p_s^*(x)}(\mathbb{R}^N)$ does not always hold. Some sufficient conditions for $W^{s(x,y),p(x,y)}(\mathbb{R}^N)\hookrightarrow L^{q_c(x)}(\mathbb{R}^N)$ to hold will be considered later.

\begin{enumerate}
\item[(QW)] $q_W:\mathbb{R}^N\to\mathbb{R}$ is a continuous function with $1<q_W^{-}\leq q_W^{+}<p^{-}$. 
\item[(A)] $\alpha:\mathbb{R}^N\to\mathbb{R}$ is a continuous function with $0<\alpha^{-}\leq \alpha^{+}<N$.
\end{enumerate}
In the equation \eqref{cho}, two-argument function $\alpha(\cdot,\cdot)$ is defined as $\displaystyle \alpha(x,y)=\frac{\alpha(x)+\alpha(y)}{2}$. In addition, we define $\displaystyle\sigma_\alpha(x)\coloneqq \frac{2N}{2N-\alpha(x)}$.

\begin{enumerate}
\item[(R)] $r\in C^1(\mathbb{R}^N;\mathbb{R})$ satisfies $\displaystyle\frac{p^{+}}{\sigma_\alpha^{-}}<r(x)\leq \frac{q_c(x)}{\sigma_\alpha^{+}}$, $r^{-}>1$ and $\displaystyle r(x)\to r_\infty$ as $|x|\to\infty$
\item[(D)] $p(x,y)\to p_\infty <r_\infty$, $s(x,y)\to s_\infty$, $q_c(x)\to (q_c)_\infty$ as $|x|,|y|\to\infty$. 
\end{enumerate}
The Kirchhoff function $M\in C([0,\infty);\mathbb{R})$, the potential function $V\in L^{\frac{q_c(x)}{q_c(x)-p(x)}}_\mathrm{uloc}(\mathbb{R}^N)$ and the weight function $W\in L^{\frac{q_c(x)}{q_c(x)-q_W(x)}}(\mathbb{R}^N)$ satisfies 
\begin{enumerate}
\item[(M)] $\inf M>0$ and there exists $\theta_M\in [1,2r^{-})$ such that $\theta_M\mathcal{M}(t)\geq M(t)t$ ($t\geq 0$), where $\mathcal{M}(t)\coloneqq\int_0^t M(\tau)d\tau$. 
\item[(V)] $\tau_0\coloneqq \inf V>0$. 
\item[(W)] $W(x)>0$ for any $x\in\mathbb{R}^N$. 
\end{enumerate}
The Kirchhoff function $M$ which satisfies the condition (M) is called non-degenerate. A typical example of such functions is $M(t)=c_1 t^{\theta_M-1}+c_0$ for some constants $c_0,c_1>0$. \par

Our main theorem is stated as follows.
\begin{theorem}
Assume (S)-(W). There exists $\varepsilon>0$ such that if $0<\varepsilon_W<\varepsilon$, then the equation \eqref{cho} has infinitely many weak solutions.
\end{theorem}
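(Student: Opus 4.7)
The plan is to apply a critical-point theorem of Krasnoselskii-genus type to the $C^1$ even energy functional
\[
I[u] = \mathcal{M}(\mathcal{E}[u]) + \int_{\mathbb{R}^N}\frac{V(x)}{p(x)}|u|^{p(x)}\,dx - \varepsilon_W\int_{\mathbb{R}^N}\frac{W(x)}{q_W(x)}|u|^{q_W(x)}\,dx - \mathcal{R}[u] - \int_{\mathbb{R}^N}\frac{|u|^{q_c(x)}}{q_c(x)}\,dx
\]
associated with \eqref{cho} on the Sobolev space $X=W^{s(x,y),p(x,y)}(\mathbb{R}^N)$, where $\mathcal{R}[u]$ denotes the Choquard double integral. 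A preliminary step is to check that $I\in C^1(X;\mathbb{R})$ and that its critical points are precisely the weak solutions of \eqref{cho}; this uses the new variable-smoothness Sobolev embedding announced in the introduction together with the variable-exponent Hardy-Littlewood-Sobolev inequality.

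\textbf{Geometry and genus levels.} Since $q_W^{+}<p^{-}<q_c^{-}$, the subcritical $\varepsilon_W$-term dominates near $0$ while the $p(x)$-Laplacian part takes over further out. I can therefore truncate $I$ outside a small ball around the origin to produce an even functional $\widetilde I$ that coincides with $I$ near $0$, is bounded below, and satisfies a global Palais-Smale type property at small negative levels. For each $k\in\mathbb{N}$, using the $q_W$-term one constructs a $k$-dimensional subspace $F_k\subset X$ and a small symmetric compact set $A_k\subset F_k$ with Krasnoselskii genus $\gamma(A_k)\ge k$ such that $\sup_{A_k}I<0$. The genus-based min-max theorem then yields a decreasing sequence of critical values $c_k<0$ of $\widetilde I$ with $c_k\nearrow 0$, which are also critical values of $I$ as soon as they are attained inside the truncation radius.

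\textbf{Main obstacle: local Palais-Smale below the threshold.} The crucial step is to verify a local Palais-Smale (or Cerami) condition for $I$ at every level $c$ strictly below an explicit threshold $c^{*}>0$. For a $(PS)_c$-sequence $\{u_n\}\subset X$, the Kirchhoff condition (M) with $\theta_M<2r^{-}$ combined with (V) gives a uniform bound on $\|u_n\|_X$; up to a subsequence $u_n\rightharpoonup u$ in $X$. I would then invoke the variable-exponent concentration-compactness lemma developed earlier in the paper, applied simultaneously to the critical Sobolev measure $|u_n|^{q_c(\cdot)}\,dx$ and to the critical Choquard measure $|u_n(x)|^{r(x)}|u_n(y)|^{r(y)}|x-y|^{-\alpha(x,y)}\,dxdy$, to obtain atomic decompositions supported on the critical sets $\{q_c=p_s^{*}\}$ and $\{r\sigma_\alpha=q_c\}$. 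Each atom comes with a quantitative lower mass bound, and a direct Kirchhoff-type computation shows that the presence of any atom would force $c\ge c^{*}$; mass escape to infinity is ruled out analogously using (D). Hence $u_n\to u$ strongly in $X$ whenever $c<c^{*}$, and the limit $u$ is a critical point of $I$.

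\textbf{Closing the argument.} It remains to synchronize the genus levels with the PS compactness window. The $q_W$-term in $I$ scales linearly in $\varepsilon_W$, so the depth of the negative well attained on each fixed $A_k$ does too; choosing $\varepsilon>0$ small enough therefore guarantees $-c^{*}<c_k<0$ for every $k$ whenever $0<\varepsilon_W<\varepsilon$. The sequence $(c_k)$ is then a sequence of infinitely many distinct critical values of $I$, which yields infinitely many weak solutions of \eqref{cho}. The principal technical difficulty throughout is the simultaneous presence of two critical nonlinearities of different types (Hardy-Littlewood-Sobolev and Sobolev) in the variable-exponent setting, whose joint concentration analysis is precisely the role of the extended concentration-compactness lemma announced in the introduction.
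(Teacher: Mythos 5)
Your overall strategy---truncating the unbounded-below functional, applying the genus-based minimax theorem of Ambrosetti--Rabinowitz, and proving $(PS)_c$ compactness at negative levels via the concentration-compactness lemma with quantitative lower bounds on atomic masses---is exactly the route the paper follows. However, your closing ``synchronization'' step is confused and does not reflect where the smallness of $\varepsilon_W$ is actually used. If, as you assert, the presence of any atom or escaping mass forced $c\geq c^{*}$ for an explicit $c^{*}>0$, then the genus levels $c_k<0<c^{*}$ would lie below the threshold automatically, and the inequality $-c^{*}<c_k$ that you then try to arrange by shrinking $\varepsilon_W$ is irrelevant; it cannot be what closes the argument.

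The actual mechanism is different and more delicate. Since $q_W^{+}<p^{-}$, combining $I[u_n]-\beta I'[u_n]u_n\leq \bar c+o_n(\|u_n\|)$ with $\bar c\leq 0$ yields
\[
\sup_n\|u_n\|\leq C\max\{\varepsilon_W^{1/(p^{+}-q_W^{-})},\varepsilon_W^{1/(p^{-}-q_W^{+})}\},
\]
so at negative levels the whole PS sequence shrinks as $\varepsilon_W\to0$. Feeding this back into the $W$-term gives $0>c\geq C_1(\min_i\nu_i+\nu_\infty)-C\varepsilon_W^{1+q_W^{-}/(p^{-}-q_W^{+})}$. Meanwhile the concentration-compactness relations \eqref{LK quantity}, \eqref{SD quantity}, \eqref{mugeqnuatinfty} force any nonzero atomic mass $\nu_i$ to exceed a universal $\Lambda_{\mathrm{loc}}>0$ and $\nu_\infty$ to exceed a universal $\Lambda_\infty>0$; hence for $\varepsilon_W$ small enough all atoms and the escaping mass must vanish, and one concludes $u_n\to u$ in $E$. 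Thus $\varepsilon_W$-smallness is consumed inside the compactness proof (making the norm bound, hence the candidate atoms, small), not by calibrating the ``depth of the negative well.'' Your threshold $c^{*}$, if made explicit, would necessarily depend on $\varepsilon_W$, and carrying out the Young-inequality absorption of the $W$-term would reproduce essentially this computation rather than an $\varepsilon_W$-independent threshold. Two smaller inaccuracies: the natural working space is $E=W^{s(x,y),p(x,y)}_V(\mathbb{R}^N)$ rather than $W^{s(x,y),p(x,y)}(\mathbb{R}^N)$, since $V$ may be unbounded; and the Choquard atoms live on $\{r\sigma_\alpha^{+}=p_s^{*}\}$, not on $\{r\sigma_\alpha=q_c\}$.
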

The Choquard-Pekar equation
\begin{equation}\label{Pekar}
-\Delta u+u=\left(\frac{1}{|x|}\ast |u|^2\right)u\quad\text{in $\mathbb{R}^3$}
\end{equation}
first appeared in the description of the quantum theory about polaron by Pekar \cite{Pekar} in 1954 and is named after Choquard, who suggested this type of equation to describe one-component plasmas in 1976. Penrose \cite{Hartree-Fock} also dealt with this modeling related to a Hartree approximation. Mathematically, a natural general form of \eqref{Pekar} is regarded as
\begin{equation}\label{mathchoquard}
-\Delta u+Vu=\left(|x|^{-(N-\alpha)}\ast |u|^r\right)|u|^{r-2}u\quad\text{in $\mathbb{R}^N$}.
\end{equation}
When $V\equiv 1$ and $r$ is between upper and lower critical exponents in the sense of the Hardy-Littlewood-Sobolev inequality, that is, $\frac{N+\alpha}{N}<r<\frac{N+\alpha}{N-2}$, the ground state solutions exist as a consequence of the mountain pass lemma or the method of the Nehari manifold. Their qualitative properties and decay asymptotics are studied in \cite{Moroz_2}. On the other hand, in the critical case, if $V\equiv 1$, \eqref{mathchoquard} turns out to have no nontrivial solution by the Pohozaev identity. Moroz and Schaftingen \cite{Moroz_lower_critical} gave a sufficient condition concerning non-constant $V$ for the existence of ground states in the lower critical case. Several works, including \cite{doubly_critical_Seok,doubly_critical_new,doubly_critical_fractional,doubly_critical_infinitely_many}, showed the existence of nontrivial solutions of doubly critical equations such as
\[
-\Delta u+u=\left(|x|^{-(N-\alpha)}\ast F(u)\right)F'(u)\quad\text{in $\mathbb{R}^N$},
\]
where $F(u)=|u|^{\frac{N+\alpha}{N}}+|u|^{\frac{N+\alpha}{N-2}}$ up to constant factors, and its extension to a fractional version. There are also several studies on the doubly critical cases in the sense of including a local critical term and a nonlocal critical term. For example, Cai and Zhang \cite{doubly_critical_Brezis_Nirenberg} dealt with the Brezis-Nirenberg type problem
\[
-\Delta u-\lambda u=\alpha |u|^{2^*-2}u+\beta (|x|^{-(N-\alpha)}\ast |u|^{\frac{N+\alpha}{N-2}}) |u|^{\frac{N+\alpha}{N-2}-2}u \quad \text{in }\Omega,
\]
where $\Omega$ is a bounded domain and $\frac{N+\alpha}{N}$ means the upper critical exponent. Another example is \cite{Li_Li}, which obtained a ground state solution to the Choquard type equation
\[
-\Delta u+ u=(|x|^{-(N-\alpha)}\ast |u|^{\frac{N+\alpha}{N}}) |u|^{\frac{N+\alpha}{N}-2}u+|u|^{2^*-2}u+g(u) \quad \text{in }\mathbb{R}^N
\]
with a local critical term and a lower critical nonlocal term by using the Pohozaev manifold method. Concerning other recent results on classical Choquard equations, see \cite{Moroz} and references therein. $p$-fractional versions and Kirchhoff type problems have also increased over the past few years. 
\cite{Kirchhoff} obtained infinitely many solutions of the Schr\"{o}dinger-Choquard-Kirchhoff equation
\begin{equation*}
M(\|u\|_{D^{s,p}}^{p})(-\Delta )_{p}^{s} u+V(x)|u|^{p-2}u=\lambda (|x|^{-(N-\alpha)}\ast |u|^{\frac{p(N+\alpha)}{2(N-ps)}}) |u|^{\frac{p(N+\alpha)}{2(N-ps)}-2}
u+\beta k(x)|u|^{q-2}u \quad \text{in }\mathbb{R}^{N},
\end{equation*}
where $M$ is a non-degenerate Kirchhoff
function and $V\in C(\mathbb{R}^{N})$ satisfies $\inf V>0$ by the symmetric mountain pass lemma. In addition, \cite{Kirchhoff_2} studied the multiplicity of the solutions of the Schr\"{o}dinger-Choquard-Kirchhoff equation under some conditions for the growth of the nonlinearity.\par

Moreover, elliptic equations with variable exponents have also gradually attracted attention in recent years, with applications appearing in models such as electrotheological fluids \cite{fluid} and image restoration \cite{image}. As for the constant smoothness case, especially $s=1$, there is a relatively large number of prior studies on critical Choquard type equations with variable integrability exponents, such as \cite{Zhang,Fu_Zhang,critical_fractional_p(x),critical_p(x)_Kirchhoff}. Additionally, \cite{Maia} obtained some existence and multiplicity results for weak solutions of $p(x)$-Choquard-Kirchhoff equation 
\[
M(\mathcal{E}[u])((-\Delta)_{p(x)}u+V(x)|u|^{p(x)-2}u)=\lambda |u|^{q(x)-2}u+\int_{\mathbb{R}^N}\frac{|u(y)|^{r(y)}}{|x-y|^{\alpha(x,y)}} |u(x)|^{r(x)-2}u(x)dy \quad \text{in }\mathbb{R}^{N},
\]
where $|u(x)|^{r(x)}$ is allowed to have the upper critical growth. However, there is still little literature written on variable order (variable smoothness) critical Choquard type equations. 

\section{Preliminaries}
First, we briefly introduce the general theory of the Sobolev spaces with variable exponents. Let $\Omega\subset\mathbb{R}^N$ be a domain with smooth boundary. We define
\[
C_{+}(\bar{\Omega})\coloneqq \{\Phi\in C(\bar{\Omega};\mathbb{R})\mid 1<\Phi^{-}\leq \Phi^{+}<\infty\},
\]
where $\Phi^{-}\coloneqq\inf\Phi$ and $\Phi^{+}\coloneqq\sup\Phi$. \par
For $p\in C_{+}(\bar{\Omega})$, we define the Lebesgue space with variable exponent $p(\cdot)$ (the Nakano space) by
\[
L^{p(\cdot )}(\Omega)=L^{p(x)}(\Omega)\coloneqq\{u:\Omega\to\mathbb{R}\mid \text{$u$ is measurable and }\int_\Omega |u(x)|^{p(x)}dx<\infty\}
\]
endowed with the Luxemburg norm
\[
\|u\|_{p(x)}\coloneqq\inf\left\{\lambda>0\;\middle|\; \int_\Omega \left|\frac{u(x)}{\lambda}\right|^{p(x)}dx\leq 1\right\}.
\]
This space is a separable and uniformly convex Banach space. We also consider the modular map $\varrho_p: L^{p(x)}(\Omega)\to\mathbb{R}$ defined by
\[
\varrho_p(u)\coloneqq \int_{\Omega}|u(x)|^{p(x)}dx.
\]
There is an important relation between the modular and the corresponding norm. 
\begin{proposition}
For $u\in L^{p(x)}(\Omega)$, we have
\begin{enumerate}
\item[(i)] $\|u\|_{p(x)}>1\;\text{[resp. $=1$, $<1$]}\Leftrightarrow \varrho_{p}>1\; \text{[resp. $=1$, $<1$]}$;
\item[(ii)] $\|u\|_{p(x)}\geq 1\Rightarrow \|u\|_{p(x)}^{p^{-}}\leq \varrho_{p}(u)\leq \|u\|_{p(x)}^{p^{+}}$;
\item[(iii)] $\|u\|_{p(x)}\leq 1\Rightarrow \|u\|_{p(x)}^{p^{+}}\leq \varrho_{p}(u)\leq \|u\|_{p(x)}^{p^{-}}$. 
\end{enumerate}
In particular, the convergence with respect to the modular is equivalent to the norm convergence in $L^{p(x)}(\Omega)$, i.e., for any $\{u_n\}\subset L^{p(x)}(\Omega)$, we have
\[
\|u_n\|_{p(x)}\to 0\Longleftrightarrow \varrho_{p}(u_n)\to 0\quad\text{and}\quad \|u_n\|_{p(x)}\to \infty\Longleftrightarrow \varrho_{p}(u_n)\to \infty.
\]
\end{proposition}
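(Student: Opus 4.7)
The plan is to reduce everything to the ``unit ball property'' of the Luxemburg norm: for $u\not\equiv 0$ in $L^{p(x)}(\Omega)$, one has $\varrho_p(u/\|u\|_{p(x)})=1$. To establish this, I would first study the auxiliary function $\phi(\lambda)\coloneqq \varrho_p(u/\lambda)=\int_\Omega |u(x)|^{p(x)}\lambda^{-p(x)}dx$ for $\lambda>0$. Since $p^->1>0$ and $|u(x)|^{p(x)}\lambda^{-p(x)}$ is strictly decreasing in $\lambda$ pointwise where $u\neq 0$, the dominated convergence theorem shows that $\phi$ is continuous on $(0,\infty)$ and strictly decreasing whenever $u$ is nonzero on a set of positive measure. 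Because $p^+<\infty$ and $u\in L^{p(x)}(\Omega)$, one has $\phi(\lambda)\to 0$ as $\lambda\to\infty$ and $\phi(\lambda)\to\infty$ as $\lambda\to 0^+$. Hence there is a unique $\lambda_0>0$ with $\phi(\lambda_0)=1$, and the definition of the Luxemburg norm together with strict monotonicity forces $\|u\|_{p(x)}=\lambda_0$.

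With this in hand, (i) follows at once: if $\|u\|_{p(x)}=1$, then $\varrho_p(u)=\phi(1)=1$; if $\|u\|_{p(x)}>1$, strict monotonicity of $\phi$ gives $\varrho_p(u)=\phi(1)>\phi(\|u\|_{p(x)})=1$; and symmetrically for $\|u\|_{p(x)}<1$. (The case $u\equiv 0$ is trivial.)

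For (ii) and (iii), write $\lambda\coloneqq\|u\|_{p(x)}$ and exploit $\varrho_p(u/\lambda)=1$, i.e.\ $\int_\Omega |u(x)|^{p(x)}\lambda^{-p(x)}dx=1$. When $\lambda\geq 1$ we have $\lambda^{-p^+}\leq \lambda^{-p(x)}\leq \lambda^{-p^-}$, so
\begin{equation*}
\lambda^{-p^+}\varrho_p(u)\;\leq\; 1\;\leq\;\lambda^{-p^-}\varrho_p(u),
\end{equation*}
which rearranges to $\lambda^{p^-}\leq \varrho_p(u)\leq \lambda^{p^+}$; the case $\lambda\leq 1$ is analogous with the inequalities on $\lambda^{-p(x)}$ reversed, yielding (iii). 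Finally, the convergence equivalence is an immediate consequence: if $\|u_n\|_{p(x)}\to 0$, eventually $\|u_n\|_{p(x)}\leq 1$ and (iii) sandwiches $\varrho_p(u_n)$ between $\|u_n\|_{p(x)}^{p^+}$ and $\|u_n\|_{p(x)}^{p^-}$, both of which vanish; conversely, if $\varrho_p(u_n)\to 0$, (i) forces $\|u_n\|_{p(x)}<1$ for large $n$ and then (iii) gives $\|u_n\|_{p(x)}^{p^+}\leq\varrho_p(u_n)\to 0$. The statements for divergence to $\infty$ are handled in the same manner using (ii).

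The only delicate point is the unit ball property; once continuity and strict monotonicity of $\phi$ are in place (which need the standing assumption $1<p^-\leq p^+<\infty$ to rule out degenerate behavior of $\lambda^{-p(x)}$), the rest is bookkeeping with the monotonicity of $\lambda\mapsto \lambda^{p(x)}$ according to whether $\lambda\gtrless 1$.
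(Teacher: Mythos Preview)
Your argument is correct and is the standard route to these norm--modular inequalities: establish the unit ball property $\varrho_p(u/\|u\|_{p(x)})=1$ via continuity and strict monotonicity of $\lambda\mapsto\varrho_p(u/\lambda)$, then read off (i)--(iii) and the convergence equivalences from elementary bounds on $\lambda^{-p(x)}$. The paper does not supply its own proof of this proposition; it simply cites the references \cite{Diening,Fan_Lp(x)} for details, so there is no alternative approach to compare against---your write-up is exactly the kind of argument those references contain.
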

The condition (i) is called the norm-modular unit ball property. The conditions (ii) and (iii) implies a useful estimate
\[
\|u\|_{p(x)}^{p^{-}}-1\leq \varrho_{p}(u)\leq \|u\|_{p(x)}^{p^{+}}+1.
\]
These properties also hold with the norm and the modular of $W^{s(x,y),p(x,y)}$ and $W^{s(x,y),p(x,y)}_V$ defined later instead of those of $L^{p(x)}$. \par
To handle several different variable exponents, we need the following inequalities.
\begin{proposition}
Let $p\in C_{+}(\bar{\Omega})$ and $q\in C(\bar{\Omega})$. Assume $pq\in C_{+} (\bar{\Omega})$ and $u\in L^{p(x)q(x)}(\Omega)$. Then, 
\begin{enumerate}
\item $\|u\|_{p(x)q(x)}\geq 1\Rightarrow \|u\|_{p(x)q(x)}^{q^{-}}\leq \| |u|^q\|_{p(x)}\leq \|u\|_{p(x)q(x)}^{q^{+}}$;
\item $\|u\|_{p(x)q(x)}\leq 1\Rightarrow \|u\|_{p(x)q(x)}^{q^{+}}\leq \| |u|^q\|_{p(x)}\leq \|u\|_{p(x)q(x)}^{q^{-}}$.
\end{enumerate}
\end{proposition}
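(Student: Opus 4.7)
The plan is to denote $A := \|u\|_{p(x)q(x)}$ and $\mu := \|\,|u|^q\,\|_{p(x)}$ and to deduce both bounds from direct modular comparisons. The starting observation is the algebraic identity
\[
\varrho_{p}(|u|^q) = \int_\Omega |u(x)|^{p(x)q(x)}\,dx = \varrho_{pq}(u),
\]
so the norm-modular unit ball property from the preceding proposition, applied both to $u\in L^{p(x)q(x)}(\Omega)$ and to $|u|^q\in L^{p(x)}(\Omega)$, forces $A$ and $\mu$ to lie on the same side of $1$. This dichotomy will be used to control signs whenever I rewrite modulars with position-dependent powers of $A$ or $\mu$.

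For part (1), assume $A\geq 1$, so $\mu\geq 1$ as well. By the definition of the Luxemburg norm, $\int_\Omega (|u|/A)^{p(x)q(x)}\,dx\leq 1$. Since $A\geq 1$ and $q(x)-q^{+}\leq 0$, we have $A^{p(x)(q(x)-q^{+})}\leq 1$, so
\[
\int_\Omega \frac{|u|^{p(x)q(x)}}{A^{q^{+}p(x)}}\,dx = \int_\Omega \left|\frac{u}{A}\right|^{p(x)q(x)} A^{p(x)(q(x)-q^{+})}\,dx \leq 1,
\]
which, read as the modular of $|u|^q/A^{q^{+}}$ in $L^{p(x)}$, gives $\mu\leq A^{q^{+}}$. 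For the lower bound I would start from $\int_\Omega |u|^{p(x)q(x)}/\mu^{p(x)}\,dx\leq 1$ and use $\mu\geq 1$ together with $q(x)/q^{-}\geq 1$ (so that $\mu^{p(x)q(x)/q^{-}}\geq \mu^{p(x)}$) to obtain
\[
\int_\Omega \left|\frac{u}{\mu^{1/q^{-}}}\right|^{p(x)q(x)}\,dx \leq \int_\Omega \frac{|u|^{p(x)q(x)}}{\mu^{p(x)}}\,dx \leq 1,
\]
which means $A = \|u\|_{p(x)q(x)}\leq \mu^{1/q^{-}}$, equivalently $A^{q^{-}}\leq \mu$.

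Part (2), with $A\leq 1$ (hence $\mu\leq 1$), follows by a completely symmetric manipulation in which the roles of $q^{+}$ and $q^{-}$ are swapped: one uses instead $A^{p(x)(q(x)-q^{-})}\leq 1$ (since now $A\leq 1$ and $q(x)-q^{-}\geq 0$) and $\mu^{p(x)q(x)/q^{+}}\geq \mu^{p(x)}$ (since $\mu\leq 1$ and $q(x)/q^{+}\leq 1$). The argument presents no serious obstacle; the only point that needs attention is first securing the same-side dichotomy for $\mu$ before rearranging modulars with variable exponents, which is precisely why the preliminary identity $\varrho_p(|u|^q)=\varrho_{pq}(u)$ is indispensable. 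The degenerate cases $u\equiv 0$ and $A=1$ are handled directly from the unit ball property.
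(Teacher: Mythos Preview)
Your argument is correct. The paper does not actually supply its own proof of this proposition; it is stated as a preliminary fact (with the general theory of $L^{p(\cdot)}$ spaces attributed to the references \cite{Diening,Fan_Lp(x)}), so there is nothing to compare against. Your route---the modular identity $\varrho_p(|u|^q)=\varrho_{pq}(u)$ to force $A$ and $\mu$ on the same side of $1$, followed by the monotonicity estimates $A^{p(x)(q(x)-q^{\pm})}\lessgtr 1$ and $\mu^{p(x)q(x)/q^{\mp}}\gtrless \mu^{p(x)}$---is exactly the standard one and is carried out cleanly; the implicit positivity $q^{-}>0$ you use follows from $q(x)=(pq)(x)/p(x)>1/p^{+}$.
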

The H\"{o}lder type inequality also holds for variable-exponent spaces. Let $p'(x)$ denote the H\"{o}lder conjugate exponent of $p(x)$, i.e., $1/p(x)+1/p'(x)=1$. 
\begin{proposition}
Let $p\in C_{+}(\bar{\Omega})$. For any $u\in L^{p(x)}(\Omega)$ and $v\in L^{p'(x)}(\Omega)$, we have
\[
\left|\int_\Omega uv dx\right|\leq \left(\frac{1}{p^{-}}+ \frac{1}{(p')^{-}}\right)\|u\|_{p(x)}\|v\|_{p'(x)}\leq 2 \|u\|_{p(x)}\|v\|_{p'(x)}.
\]
\end{proposition}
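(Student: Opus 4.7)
The plan is to reduce to the unit-norm case by homogeneity and then apply Young's inequality pointwise with the variable exponent $p(x)$. First, if $\|u\|_{p(x)}=0$ or $\|v\|_{p'(x)}=0$, then $u=0$ or $v=0$ a.e.\ and both sides vanish, so assume both norms are strictly positive. Replacing $u$ by $u/\|u\|_{p(x)}$ and $v$ by $v/\|v\|_{p'(x)}$, by linearity it suffices to establish
\[
\int_\Omega |uv|\,dx \leq \frac{1}{p^{-}}+\frac{1}{(p')^{-}}
\]
under the assumption $\|u\|_{p(x)}=\|v\|_{p'(x)}=1$.

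Next I would apply the classical Young inequality $ab\leq a^\alpha/\alpha + b^\beta/\beta$ (valid for $a,b\geq 0$ and $1/\alpha+1/\beta=1$) pointwise with $\alpha=p(x)$, $\beta=p'(x)$, $a=|u(x)|$, $b=|v(x)|$, yielding
\[
|u(x)v(x)|\leq \frac{|u(x)|^{p(x)}}{p(x)}+\frac{|v(x)|^{p'(x)}}{p'(x)}\quad\text{for a.e.\ }x\in\Omega.
\]
Integrating and bounding $1/p(x)\leq 1/p^{-}$ and $1/p'(x)\leq 1/(p')^{-}$ gives
\[
\int_\Omega |uv|\,dx \leq \frac{1}{p^{-}}\varrho_p(u)+\frac{1}{(p')^{-}}\varrho_{p'}(v).
\]
The norm-modular unit ball property from the preceding proposition then yields $\varrho_p(u)=\varrho_{p'}(v)=1$ (strictly, $\|u\|_{p(x)}=1$ gives $\varrho_p(u)\leq 1$, and one can argue that equality holds by a standard monotonicity/continuity argument in $\lambda$, or simply note that the inequality $\varrho_p(u)\leq 1$ already suffices here). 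Combining with $|\int_\Omega uv\,dx|\leq \int_\Omega |uv|\,dx$ and undoing the normalization finishes the first inequality.

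For the second inequality, since $p\in C_{+}(\bar{\Omega})$ forces $p^{-}>1$ and consequently $(p')^{-}=p^{+}/(p^{+}-1)>1$, each of $1/p^{-}$ and $1/(p')^{-}$ is strictly less than $1$, so their sum is at most $2$. There is no genuine obstacle here; the only mildly delicate point is to justify $\varrho_p(u)\leq 1$ whenever $\|u\|_{p(x)}=1$, which is immediate from the definition of the Luxemburg norm as an infimum together with Fatou's lemma applied to the defining family of integrals (or, equivalently, from the unit ball property as already stated).
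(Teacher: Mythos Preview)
Your proof is correct and is essentially the standard argument (normalize, apply Young's inequality pointwise with exponent $p(x)$, use $\varrho_p(u)\leq 1$ from the unit ball property). The paper itself does not supply a proof of this proposition; it is stated as a preliminary fact with the reader referred to \cite{Diening,Fan_Lp(x)} for details, and the argument you wrote is exactly the one found in those references.
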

\begin{remark}
From this, we can deduce that if $\displaystyle \frac{1}{q(x)}=\frac{1}{p_1(x)}+ \frac{1}{p_2(x)}$, then for any $u_1\in L^{p_1(x)}$ and $u_2\in L^{p_2(x)}$, we have
\[
\|u_1u_2\|_{q(x)}\leq\left(\left(\frac{q}{p_1}\right)^{+}+ \left(\frac{q}{p_2}\right)^{+}\right)\|u_1\|_{p_1(x)}\|u_2\|_{p_2(x)}.
\]
\end{remark}

For details and proof, see, e.g., \cite{Diening,Fan_Lp(x)}. \par
The uniformly local Lebesgue space $L^{p(x)}_{\mathrm{uloc}}(\mathbb{R}^N)$ with variable exponent $p\in C_{+}(\mathbb{R}^N)$ is defined as follows.
\[
L^{p(x)}_{\mathrm{uloc}}(\mathbb{R}^N)\coloneqq\{u:\mathbb{R}^N\to\mathbb{R}:\text{measurable}\mid \forall r>0;\sup_{x\in\mathbb{R}^N}\| u|_{B_r(x)}\|_{L^{p(x)}(B_r(x))}<\infty\}.
\]

As for variable-exponent Sobolev spaces, we also treat variable exponents with two arguments in a similar way. For symmetric continuous functions $p:\bar{\Omega}\times \bar{\Omega} \to (1,\infty)$ and $s: \bar{\Omega}\times \bar{\Omega} \to (0,1)$ such that $0<s^{-}\leq s^{+}<1$ and $s^{+}p^{+}<N$ and a continuous function $\beta\in C_{+}(\bar{\Omega})$, the fractional Sobolev (Sobolev-Slobodeckij) space $W^{s(x,y),\beta(x),p(x,y)}(\Omega)$ with variable order and variable exponents is defined to be
\[
\left\{u\in L^{\beta(x)}(\Omega)\mid \exists\lambda>0\text{ s.t. }\int_\Omega\int_\Omega\frac{|u(x)-u(y)|^{p(x,y)}}{\lambda^{p(x,y)} |x-y|^{N+s(x,y)p(x,y)}}dxdy<\infty\right\}.
\]
We set
\[
[u]_{s(x,y),p(x,y);\Omega}\coloneqq\inf\left\{\lambda>0\mid \int_\Omega\int_\Omega\frac{|u(x)-u(y)|^{p(x,y)}}{\lambda^{p(x,y)} |x-y|^{N+s(x,y)p(x,y)}}dxdy<1\right\}
\]
and equip $W^{s(x,y),\beta(x),p(x,y)}(\Omega)$ with the norm
\[
\|u\|_{W^{s(x,y),\beta(x),p(x,y)}(\Omega)}\coloneqq \|u\|_{L^{\beta (x)}(\Omega)}+ [u]_{s(x,y),p(x,y);\Omega}.
\]
Let $p(x)=p(x,x)$. $\|\cdot\|_{W^{s(x,y),p(x),p(x,y)}(\Omega)}$ is equivalent to the Luxemburg norm corresponding to the modular
\[
\varrho_{W^{s,p}}(u)= \int_\Omega\int_\Omega\frac{|u(x)-u(y)|^{p(x,y)}}{|x-y|^{N+s(x,y)p(x,y)}}dxdy+ \int_\Omega |u|^{p(x)}dx.
\]
In the present paper, we deal with the case $\Omega=\mathbb{R}^N$. For $\Omega=\mathbb{R}^N$, we omit the subscript ``$\Omega$'' for the sake of notational simplicity. When $\displaystyle s(x,y)=\frac{s(x)+s(y)}{2}$ and $\displaystyle p(x,y)=\frac{p(x)+p(y)}{2}$ holds for some one-argument functions $s(\cdot),p(\cdot)\in C^{+}(\mathbb{R}^N)$, we write
\[
W^{s(x,y),\beta(x),p(x,y)}(\Omega)= W^{s(x),\beta(x),p(x)}(\Omega),
\]
\[
[u]_{s(x,y),p(x,y);\Omega} = [u]_{s(x),p(x);\Omega}.
\]
When $\beta(x)=p(x)\, (=p(x,x))$, we write $W^{s(x,y),\beta(x),p(x,y)}(\Omega)= W^{s(x,y),p(x,y)}(\Omega)$. \par

Next, we introduce the $p(x)$-generalization of the Hardy-Littlewood-Sobolev inequality (see \cite{variable_exponent_HLS}), which plays an important role in dealing with Choquard type equations with variable exponents. 
\begin{proposition}\label{HLS}
Let $p_1,p_2\in C^{+}(\mathbb{R}^N)$ and $\alpha:\mathbb{R}^N\times \mathbb{R}^N\to\mathbb{R}$ be a continuous function such that $0<\alpha^{-}\leq\alpha^{+}<N$. If 
\[
\frac{1}{p_1(x)}+ \frac{\alpha(x,y)}{N}+\frac{1}{p_2(y)}=2\quad (\forall x,y\in\mathbb{R}^N),
\]
then there exists a constant $C>0$ such that for any $f_1\in L^{p_1^{+}} (\mathbb{R}^N)\cap L^{p_1^{-}} (\mathbb{R}^N)$ and for any $f_2\in L^{p_2^{+}} (\mathbb{R}^N)\cap L^{p_2^{-}} (\mathbb{R}^N)$, we have
\[
\left|\int_{\mathbb{R}^N} \int_{\mathbb{R}^N}\frac{f_1(x)f_2(y)}{|x-y|^{\alpha(x,y)}}dxdy\right|\leq C(\|f_1\|_{L^{p_1^{+}}} \|f_2\|_{L^{p_2^{+}}}+ \|f_1\|_{L^{p_1^{-}}} \|f_2\|_{L^{p_2^{-}}}).
\]
\end{proposition}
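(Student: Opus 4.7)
The plan is to reduce the variable-exponent inequality to two applications of the classical Hardy-Littlewood-Sobolev inequality with constant exponents, patched together by splitting the kernel at the scale $|x-y|=1$. The first and key observation is that the pointwise balance hypothesis $1/p_1(x)+\alpha(x,y)/N+1/p_2(y)=2$ forces
\[
\alpha(x,y)=N\Bigl(2-\tfrac{1}{p_1(x)}-\tfrac{1}{p_2(y)}\Bigr).
\]
Because $p_1$ depends only on $x$ and $p_2$ only on $y$, the sup and inf of $\alpha$ decouple into
\[
\alpha^{+}=N\Bigl(2-\tfrac{1}{p_1^{+}}-\tfrac{1}{p_2^{+}}\Bigr),\qquad \alpha^{-}=N\Bigl(2-\tfrac{1}{p_1^{-}}-\tfrac{1}{p_2^{-}}\Bigr),
\]
so both constant-exponent triples $(p_1^{+},\alpha^{+},p_2^{+})$ and $(p_1^{-},\alpha^{-},p_2^{-})$ satisfy the classical HLS balance $1/p+\alpha/N+1/q=2$ and lie in the admissible range $p_i^{\pm}>1$, $\alpha^{\pm}\in(0,N)$.

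Next I would invoke the elementary pointwise estimate
\[
\frac{1}{|x-y|^{\alpha(x,y)}} \leq \frac{\chi_{\{|x-y|\leq 1\}}}{|x-y|^{\alpha^{+}}}+\frac{\chi_{\{|x-y|>1\}}}{|x-y|^{\alpha^{-}}},
\]
which holds because $a\mapsto |x-y|^{-a}$ is increasing in $a$ exactly on $\{|x-y|\leq 1\}$ and decreasing on $\{|x-y|>1\}$. Substituting this into the double integral and dropping the indicator functions to enlarge the integration domain to all of $\mathbb{R}^{N}\times \mathbb{R}^{N}$, the classical HLS inequality applied with the two triples identified above yields
\[
\iint \frac{|f_1(x)\,f_2(y)|}{|x-y|^{\alpha^{\pm}}}\,dx\,dy \leq C\,\|f_1\|_{L^{p_1^{\pm}}}\|f_2\|_{L^{p_2^{\pm}}},
\]
and summing the $+$ and $-$ contributions produces the asserted bound.

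The main obstacle, if any, is purely bookkeeping of the sup/inf identities for $\alpha^{\pm}$: when the extrema $p_i^{\pm}$ are not attained on $\mathbb{R}^{N}$ they still hold as limits by continuity of $p_1,p_2$, and the classical HLS inequality passes to such limits by monotone or dominated convergence. Once this observation is made the entire argument collapses to a two-line reduction, reflecting the fact that the pointwise \emph{equality} in the balance relation does most of the work, cleanly separating the variables $x$ and $y$ and matching both extremal configurations to the classical HLS triple.
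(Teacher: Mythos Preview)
Your argument is correct and is precisely the approach of the cited reference \cite{variable_exponent_HLS} that the paper defers to: the pointwise balance forces $\alpha(x,y)=N(2-1/p_1(x)-1/p_2(y))$, so the extrema decouple and both triples $(p_1^{\pm},\alpha^{\pm},p_2^{\pm})$ satisfy the classical HLS condition, after which the kernel splitting $|x-y|^{-\alpha(x,y)}\leq |x-y|^{-\alpha^{+}}+|x-y|^{-\alpha^{-}}$ reduces everything to two applications of the constant-exponent inequality. The only remark is that your closing caveat about limits is unnecessary: the identities $\alpha^{\pm}=N(2-1/p_1^{\pm}-1/p_2^{\pm})$ hold exactly as suprema and infima regardless of attainment, and classical HLS applies directly to those exponents since $p_i^{\pm}\in(1,\infty)$ and $\alpha^{\pm}\in(0,N)$ by hypothesis.
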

\begin{remark}
Let us fix $\alpha:\mathbb{R}^N\times \mathbb{R}^N \to \mathbb{R}$ such that $0<\alpha^{-}\leq \alpha^{+}<N$, and $r\in C^{+}(\mathbb{R}^N)$. If there exists $\sigma\in C^{+}(\mathbb{R}^N)$ such that for any $x,y\in\mathbb{R}^N$, we have
\[
\frac{1}{\sigma(x)}+\frac{\alpha(x,y)}{N}+\frac{1}{\sigma(y)}=2
\]
and that for any $x\in\mathbb{R}^N$, we have
\[
p(x)\leq r(x)\sigma^{-}\leq r(x)\sigma^{+}\leq p_s^*(x),
\]
then by applying Proposition \ref{HLS} to the case where $p_1=p_2=\sigma$ and $f_1=f_2=|u|^{r(x)}$ for any $u\in W^{s(x,y),p(x,y)}(\mathbb{R}^N)\hookrightarrow L^{r(x)\sigma^{-}} (\mathbb{R}^N)\cap L^{r(x)\sigma^{+}} (\mathbb{R}^N)$, we obtain
\begin{align*}
&\phantom{=}\int_{\mathbb{R}^N} \int_{\mathbb{R}^N}\frac{|u(x)|^{r(x)}|u(y)|^{r(y)}}{|x-y|^{\alpha(x,y)}}dxdy\\ 
&\leq C\max\{\|u\|_{r(x)\sigma^{-}}^{2 r^{+}}, \|u\|_{r(x)\sigma^{-}}^{2 r^{-}}, \|u\|_{r(x)\sigma^{+}}^{2 r^{+}}, \|u\|_{r(x)\sigma^{+}}^{2 r^{-}}\}.
\end{align*}
Furthermore, inferring as in \cite{variable_exponent_HLS}, we can see the functional
\[
K: W^{s(x,y),p(x,y)}(\mathbb{R}^N)\to\mathbb{R}; u\mapsto \int_{\mathbb{R}^N} \int_{\mathbb{R}^N}\frac{|u(x)|^{r(x)}|u(y)|^{r(y)}}{2r(x)|x-y|^{\alpha(x,y)}}dxdy
\]
is Fr\'{e}chet differentiable with derivative
\[
K'[u]v=\int_{\mathbb{R}^N} \int_{\mathbb{R}^N}\frac{|u(x)|^{r(x)}|u(y)|^{r(y)-2}v(y)}{|x-y|^{\alpha(x,y)}}dxdy.
\]
In addition, the same formula holds with $W^{s(x,y),p(x,y)}_V(\mathbb{R}^N)$ defined later instead of $W^{s(x,y),p(x,y)}(\mathbb{R}^N)$. 
\end{remark}
\begin{definition}
If we have $\{x\in\mathbb{R}^N\mid r(x)\sigma_\alpha^{-}=p(x)\}\neq\emptyset$, then we say that $K$ described above has the lower critical growth in the sense of the Hardy-Littlewood-Sobolev inequality. Similarly, if $\{x\in\mathbb{R}^N\mid r(x)\sigma_\alpha^{+}=p_s^*(x)\}\neq\emptyset$, then we say that $K$ has the upper critical growth in the sense of the Hardy-Littlewood-Sobolev inequality. 
\end{definition}
Let us notice that the condition (R) and (QC) allows $K$ to have the upper critical growth.\par
We now define the function space we work in. Define a modular
\[
\varrho_E(u)\coloneqq \int_{\mathbb{R}^N} \int_{\mathbb{R}^N}\frac{|u(x)-u(y)|^{p(x,y)}}{|x-y|^{N+s(x,y)p(x,y)}}dxdy+ \int_{\mathbb{R}^N} V_{+}|u|^{p(x)}dx
\]
for any measurable function $u$ and the corresponding Luxemburg norm
\[
\|u\|=\|u\|_E\coloneqq\inf\left\{\lambda>0: \varrho_{E}(u/\lambda)\leq 1 \right\}.
\]
The modular space
\begin{align*}
E&=W^{s(x,y),p(x,y)}_V(\mathbb{R}^N)\\
&\coloneqq \{u:\text{measurable}\mid \lim_{\lambda\to\infty}\varrho_{E}(u/\lambda)=0\} \\
&=\{u:\text{measurable}\mid \exists\lambda>0\text{ s.t. }\varrho_{E}(u/\lambda)<\infty\}
\end{align*}
equipped with the norm $\|\cdot\|_E$ is uniformly convex Banach space (due to Lemma 2.4.16 and Theorem 2.4.14 in \cite{Diening}). \par
The energy functional $I:E\to\mathbb{R}$ associated with the equation \eqref{cho} is
\begin{equation}\label{functional}
\begin{split}
I[u]&=\mathcal{M}(\mathcal{E}[u])+ \int_{\mathbb{R}^N} \frac{1}{p(x)}V(x)|u|^{p(x)}dx \\
&\quad\quad- \int_{\mathbb{R}^N} \int_{\mathbb{R}^N} \frac{|u(x)|^{r(x)} |u(y)|^{r(y)}}{2r(x)|x-y|^{\alpha(x,y)}}dxdy-\int_{\mathbb{R}^N} \frac{1}{q_c(x)}|u|^{q_c(x)}dx \\
&\quad\quad -\varepsilon_W\int_{\mathbb{R}^N} \frac{1}{q_W(x)} W(x)|u|^{q_W(x)}dx.
\end{split}
\end{equation}
A critical point of $I$ is called a \textit{weak solution} or simply a \textit{solution} of \eqref{cho}.

\section{A result on Sobolev embeddings}
Next, we state the variable-order Sobolev embedding theorem with variable exponents. 
Combining the proofs of Theorem 3.2 in \cite{variable-order_p(x)_Sobolev} and Theorem 3.2 in \cite{uniformly_continuous} (or Theorem 3.6 in \cite{variable-order_p(x)}), we obtain the following compact embedding result.
\begin{proposition}
Let $\Omega\subset\mathbb{R}^N$ be a bounded Lipschitz domain. Assume that continuous and symmetric functions $p\in C_{+}(\bar{\Omega}\times \bar{\Omega})$ and $s: \bar{\Omega}\times \bar{\Omega} \to\mathbb{R}$ satisfy $0<s^{-}\leq s^{+}<1$ and $s^{+}p^{+}<N$ and that $\beta\in C_{+}(\bar{\Omega})$ satisfies $\beta (x)\geq p(x)$ for any $x\in \bar{\Omega}$. Let $q\in C_{+}(\bar{\Omega})$ be such that $\beta (x)\leq q(x)< p_s^*(x)$ for any $x\in \bar{\Omega}$ (and thus $\inf(p_s^*-q)>0$ since $\bar{\Omega}$ is compact). Then, there exists a constant $C>0$ such that for any $u\in W^{s(x,y),\beta(x),p(x,y)}(\Omega)$, we have
\[
\|u\|_{L^{q(x)}(\Omega)}\leq C \|u\|_{W^{s(x,y),\beta(x),p(x,y)}(\Omega)}.
\]
Moreover, this embedding is compact.
\end{proposition}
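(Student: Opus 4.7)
The plan is to combine the strategies of the two cited references via a finite cover of $\bar{\Omega}$ together with a freezing of exponents on each piece, reducing matters to the classical constant-exponent fractional Sobolev embedding. Since $\bar{\Omega}$ is compact and $p_s^* - q > 0$ there, I first fix $\delta_0 > 0$ with $q(x) + 2\delta_0 < p_s^*(x)$ for all $x \in \bar{\Omega}$. By uniform continuity of $s$, $p$, $q$, $\beta$, I choose $\eta > 0$ small and a finite cover $\{B_i\}_{i=1}^m$ of $\bar{\Omega}$ by open balls intersected with $\bar{\Omega}$ on which each exponent oscillates by less than $\eta$. Taking $\eta$ small enough, the constants $s_i^- := \inf_{B_i \times B_i} s$, $p_i^- := \inf_{B_i \times B_i} p$, and $q_i^+ := \sup_{B_i} q$ still satisfy $q_i^+ + \delta_0 < (p_i^-)^*_{s_i^-}$, preserving a subcritical margin locally.

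On each $B_i$, the main task is a modular comparison between the variable-order seminorm and the constant one. Splitting the double integral according to whether $|u(x)-u(y)| \geq 1$ or $< 1$, and using $p(x,y) \geq p_i^-$, $s(x,y) p(x,y) \geq s_i^- p_i^-$ on $B_i$ together with $|x-y| \leq \operatorname{diam}(B_i)$, one expects an estimate of the form
\[
\iint_{B_i\times B_i}\frac{|u(x)-u(y)|^{p_i^-}}{|x-y|^{N+s_i^- p_i^-}}\,dxdy \leq C\Bigl( 1 + \iint_{B_i\times B_i}\frac{|u(x)-u(y)|^{p(x,y)}}{|x-y|^{N+s(x,y)p(x,y)}}\,dxdy + \|u\|_{L^{\beta(x)}(B_i)}^{p_i^+} \Bigr),
\]
essentially as in Theorem 3.2 of \cite{uniformly_continuous}. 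A parallel comparison bounds $\|u\|_{L^{p_i^-}(B_i)}$ by $1+\|u\|_{L^{\beta(x)}(B_i)}$ on the bounded domain. The classical constant-exponent fractional Sobolev embedding $W^{s_i^-,p_i^-}(B_i)\hookrightarrow L^{q_i^+}(B_i)$ combined with the inclusion $L^{q_i^+}(B_i)\hookrightarrow L^{q(x)}(B_i)$ then yields a local bound $\|u\|_{L^{q(x)}(B_i)} \leq C(1+\|u\|_{W^{s(x,y),\beta(x),p(x,y)}(\Omega)})^{\theta}$ for some $\theta>0$; summing over the finite cover and converting between modular and norm via the unit ball property produces the asserted continuous embedding.

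For compactness, the same decomposition applies: a bounded sequence in $W^{s(x,y),\beta(x),p(x,y)}(\Omega)$ is bounded in each $W^{s_i^-,p_i^-}(B_i)$, and the classical compact fractional Sobolev embedding into $L^{q_i^+}(B_i)$ followed by $L^{q_i^+}(B_i)\hookrightarrow L^{q(x)}(B_i)$ supplies a convergent subsequence on each $B_i$; a diagonal extraction over the finite cover then gives convergence in $L^{q(x)}(\Omega)$ by additivity of the modular. The main obstacle I anticipate is precisely the modular comparison step above: the singular kernel $|x-y|^{-N-s(x,y)p(x,y)}$ has to be compared with $|x-y|^{-N-s_i^- p_i^-}$ on regions where $|x-y|$ is small, which requires splitting carefully into $\{|u(x)-u(y)|\geq 1\}$ and its complement and absorbing the $\eta$-oscillation in $s$ and $p$ without eating into the subcritical margin $\delta_0$.
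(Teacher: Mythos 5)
The paper offers no proof of this Proposition beyond citing Theorem~3.2 of \cite{variable-order_p(x)_Sobolev} and Theorem~3.2 of \cite{uniformly_continuous}, so there is no line-by-line comparison to make; but your overall strategy (finite cover, freezing exponents locally, using the classical constant-exponent embedding, gluing via the modular) is indeed what those references do, and it is the natural route.

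However, the modular comparison that you single out as the crux contains a genuine gap. Splitting the double integral into $\{|u(x)-u(y)|\ge1\}$ and $\{|u(x)-u(y)|<1\}$ does not close. On the first set, the pointwise inequalities $|u(x)-u(y)|^{p_i^-}\le|u(x)-u(y)|^{p(x,y)}$ and (for $|x-y|<1$) $|x-y|^{-N-s_i^-p_i^-}\le|x-y|^{-N-s(x,y)p(x,y)}$ are fine. But on $\{|u(x)-u(y)|<1\}$ you can only say $|u(x)-u(y)|^{p_i^-}\le1$, and the remaining factor $|x-y|^{-N-s_i^-p_i^-}$ is not integrable over any neighborhood of the diagonal in $B_i\times B_i$; since this region always contains a neighborhood of the diagonal for non-constant $u$, the left-hand side of your displayed estimate is not controlled by the right-hand side. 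Bounded $|x-y|$ and the ``$+1$'' do not save it. What is actually needed is one of two things. Either drop the smoothness to $t_i=s_i^--\eta'$ (with $\eta'>0$ chosen small via the margin $\delta_0$ so that $(p_i^-)^*_{t_i}>q_i^+$ still holds, and $\eta$ chosen small relative to $\eta'$), so that the error kernel becomes $|x-y|^{-N+(s(x,y)-t_i)p_i^-}$ with a strictly positive gap in the exponent and hence integrable; or else rewrite the constant seminorm as $\|F\|_{L^{p_i^-}_{\mu_i}}$ with $F=|u(x)-u(y)|/|x-y|^{2s_i^-}$ and $d\mu_i=dxdy/|x-y|^{N-s_i^-p_i^-}$ (a finite measure on $B_i\times B_i$), invoke the norm inclusion $L^{p(x,y)}_{\mu_i}\hookrightarrow L^{p_i^-}_{\mu_i}$ for finite measures, and then control the remaining kernel-ratio factor. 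With only uniform continuity (no log-H\"older assumption), this ratio factor is not bounded near the diagonal unless you again give up a little in $s$, so in the end you are forced back to the $t_i<s_i^-$ route.

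So: correct skeleton, but the key local seminorm comparison as you wrote it is false, and the $|u(x)-u(y)|\gtrless1$ split is not the right tool; you must sacrifice a small amount of the order $s$, paying for it out of the subcritical gap $\inf(p_s^*-q)>0$. The compactness and gluing steps you describe are fine once that local embedding is established.
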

For critical embeddings, we need the log-H\"{o}lder continuity of exponents (see Corollary 8.3.2 and Proposition 8.3.7 in \cite{Diening}). As for the critical case, we refer to Theorem 3.3 in \cite{s_variable_exponent_CC} and its application \cite{Liang}. By Theorem 7 in \cite{variable_triebel_lizorkin}, we obtain the following Sobolev embedding result for variable exponent Triebel-Lizorkin spaces.
\begin{proposition}
Let $p_1,p_2$ and $q_1,q_2$ be bounded and globally log-H\"{o}lder continuous exponents with positive infima. Let $s_1,s_2$ be bounded and locally log-H\"{o}lder continuous functions with finite limits at infinity. If $s_2-N/p_2= s_1-N/p_1$, $p_2\geq p_1$ and $(s_1-s_2)^{-}>0$, then the continuous embedding $F_{p_1(\cdot),q_1(\cdot)}^{s_1(\cdot)}(\mathbb{R}^N)\hookrightarrow F_{p_2(\cdot),q_2(\cdot)}^{s_2(\cdot)}(\mathbb{R}^N)$ holds.
\end{proposition}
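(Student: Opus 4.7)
The plan is to reduce the embedding to a pointwise Hedberg-type inequality on Littlewood-Paley pieces, then combine it with vector-valued maximal inequalities for variable exponent Lebesgue spaces. Fix a smooth Littlewood-Paley resolution $\{\varphi_j\}_{j\geq 0}$ of unity on the Fourier side. By the (variable exponent) Littlewood-Paley characterization of $F^{s(\cdot)}_{p(\cdot),q(\cdot)}(\mathbb{R}^N)$, it suffices to prove
\[
\left\|\Bigl(\sum_{j}2^{j s_2(x)q_2(x)}|\varphi_j\ast u|^{q_2(x)}\Bigr)^{1/q_2(x)}\right\|_{L^{p_2(\cdot)}}\lesssim \left\|\Bigl(\sum_{j}2^{j s_1(x)q_1(x)}|\varphi_j\ast u|^{q_1(x)}\Bigr)^{1/q_1(x)}\right\|_{L^{p_1(\cdot)}}.
\]

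Next, for each dyadic block $f_j\coloneqq \varphi_j\ast u$, whose Fourier spectrum sits in an annulus of scale $2^j$, I would establish a Nikolskii/Bernstein type pointwise bound
\[
|f_j(x)|\leq C\, 2^{j N(1/p_1(x)-1/p_2(x))}\bigl(M|f_j|^{t}(x)\bigr)^{1/t},
\]
where $M$ is the Hardy-Littlewood maximal operator and $1<t<p_1^{-}$. The Sobolev scaling $s_2-N/p_2=s_1-N/p_1$ rewrites the prefactor as $2^{-j(s_1(x)-s_2(x))}$, which is the gain that converts the $s_2$-weighted sum into the $s_1$-weighted one. The condition $(s_1-s_2)^{-}>0$ then allows Hölder's inequality in $j$ (together with $p_2\geq p_1$ and standard $\ell^{q}$ nesting) to absorb the summation in $q_2$ into one in $q_1$, with a geometrically convergent factor coming from $\sum_j 2^{-j(s_1-s_2)\cdot}<\infty$.

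Finally, substituting the pointwise estimate, the right-hand side becomes an $L^{p_2(\cdot)}(\ell^{q_2(\cdot)})$ norm of the family $\{(M|f_j|^t)^{1/t}\}$. Applying the vector-valued Fefferman-Stein maximal inequality in variable exponent Lebesgue spaces (valid under the global log-Hölder continuity of $p_1, p_2, q_1, q_2$, their positive infima, and the existence of limits at infinity, by Cruz-Uribe-Fiorenza-Neugebauer / Diening) yields the $L^{p_1(\cdot)}(\ell^{q_1(\cdot)})$ norm of $\{f_j\}$, which is the target right-hand side.

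The main obstacle is the pointwise Hedberg-type inequality, because the exponent $1/p_1(x)-1/p_2(x)$ and the scaling $2^{j(s_1(x)-s_2(x))}$ vary with $x$. Justifying that one may replace these variable exponents by their values at $x$ on a ball of radius $2^{-j}$ about $x$, up to a uniform constant, is exactly where the locally log-Hölder regularity of $s_1,s_2$ (transferred to $1/p_1,1/p_2$ via the scaling relation) and the finite limits at infinity are used; without them the prefactor cannot be controlled uniformly in $j$ and in the position of the center of the Fourier annulus. Once this Hedberg-type estimate is secured, the remaining steps are routine maximal-function arguments already available in the variable exponent framework.
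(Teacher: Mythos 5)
The paper does not prove this proposition; it is quoted verbatim from Theorem~7 of Vyb\'iral (\emph{Sobolev and Jawerth embeddings for spaces with variable smoothness and integrability}, 2009), and the citation is the entire ``proof''. So the relevant question is whether your blind argument is a valid independent proof --- and I do not think it can be repaired in the form you propose.

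The fatal step is the transition from the maximal-function estimate to the $L^{p_1(\cdot)}(\ell^{q_1(\cdot)})$ norm. A pointwise bound of Hedberg/Peetre type
\[
|f_j(x)|\lesssim 2^{jN(1/p_1(x)-1/p_2(x))}\bigl(M|f_j|^{t}(x)\bigr)^{1/t}
\]
followed by the vector-valued Fefferman--Stein inequality cannot change the integrability exponent: Fefferman--Stein maps $L^{p}(\ell^{q})$ boundedly into itself, with the \emph{same} $p$ and $q$. After you substitute the pointwise bound and apply the maximal inequality, you are left with the $L^{p_2(\cdot)}(\ell^{q_2(\cdot)})$ norm of $\{2^{js_1(\cdot)}f_j\}$, not the $L^{p_1(\cdot)}(\ell^{q_1(\cdot)})$ norm. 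In other words this route proves only the lift $F^{s_1(\cdot)}_{p_2(\cdot),q_2(\cdot)}\hookrightarrow F^{s_2(\cdot)}_{p_2(\cdot),q_2(\cdot)}$, not the Sobolev embedding where $p$ genuinely increases. Relatedly, your prefactor has the wrong sign: with $s_2-N/p_2=s_1-N/p_1$ one gets $N(1/p_1-1/p_2)=s_1-s_2>0$, so the factor is $2^{+j(s_1(x)-s_2(x))}\geq 1$, i.e.\ it is an \emph{amplification}, not a decay; there is no ``geometrically convergent'' $j$-sum to exploit.

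The deeper point is that the Jawerth--Franke type Sobolev embedding for Triebel--Lizorkin scales (independence of both microscopic parameters $q_1,q_2$) is \emph{not} a consequence of Nikolskii plus Fefferman--Stein. Even in the constant-exponent case the proof is a genuinely different argument: one passes to the sequence spaces $f^{s}_{p,q}$ via the $\varphi$-transform, reduces to the strongest case $f^{s_1}_{p_1,q_1}\hookrightarrow f^{s_2}_{p_2,\infty}$, and then runs a level-set/truncation argument on the coefficient sequence (split into ``large'' and ``small'' coefficients relative to a local threshold, followed by a Chebyshev-type estimate on the distribution function). This is exactly the structure Vyb\'iral adapts to variable exponents, which is why the log-H\"older conditions on $p_i,q_i,s_i$ enter: they ensure the discrete sequence-space norms are well behaved and the $\varphi$-transform characterization holds. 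If you want an independent proof, the sequence-space truncation route is the one to develop; the Hedberg-plus-Fefferman--Stein strategy, as stated, cannot produce the change of $p$.
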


By Theorem 4.5 in \cite{100}, we have $F_{p(\cdot),2}^{s} (\mathbb{R}^N)\cong H^{s,p(\cdot)} (\mathbb{R}^N)$ ($s\in [0,\infty)$) and $H^{m,p(\cdot)} (\mathbb{R}^N)\cong W^{m,p(\cdot)} (\mathbb{R}^N)$ ($m\in\mathbb{N}$), where $H^{s,p(\cdot)} (\mathbb{R}^N)$ is a variable exponent Bessel potential space. From the continuous embedding for variable exponent Triebel-Lizorkin spaces, we conclude $W^{m,p(x)} (\mathbb{R}^N)\hookrightarrow L^{p_m^*(x)} (\mathbb{R}^N)$ for $m\in\mathbb{N}$. For $m=1$, also refer to Theorem 8.3.1 in \cite{Diening}. However, $W^{s,p} (\mathbb{R}^N)=F_{p,p}^{s}(\mathbb{R}^N)$ ($s\notin\mathbb{Z}$) is no longer generally valid in the variable exponent case, so the continuous embedding for variable exponent Triebel-Lizorkin spaces does not imply the critical Sobolev embedding for Slobodeckij space with variable smoothness and variable integrability. \par
Generalizing Theorem 3.3 in \cite{s_variable_exponent_CC}, we obtain the following critical embedding result. 
\begin{theorem}
Assume that symmetric, bounded and globally log-H\"{o}lder continuous functions $p\in C_{+}(\mathbb{R}^N\times \mathbb{R}^N)$ and $s: \mathbb{R}^N\times \mathbb{R}^N\to\mathbb{R}$ satisfy $0<s^{-}\leq s^{+}<1$ and $s^{+}p^{+}<N$. In addition, assume $p(x,y)$ and $s(x,y)$ is constant in $\delta$-neighborhood of $\{x=y\}$ for some $\delta>0$. Let $q\in C_{+}(\mathbb{R}^N)$ be a uniformly continuous function such that $p^{+}\leq q^{-}\leq q(x)\leq p_s^*(x)$. Then, the continuous embedding $W^{s(x,y),p(x,y)}(\mathbb{R}^N)\hookrightarrow L^{q(x)}(\mathbb{R}^N)$ holds.
\end{theorem}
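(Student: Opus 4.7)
The plan is to reduce the variable-order variable-exponent critical embedding to the classical constant-exponent fractional Sobolev embedding by exploiting the constancy of $p$ and $s$ near the diagonal. First, note that the open set $\mathcal{N}_\delta := \{(x,y)\in\mathbb{R}^N\times\mathbb{R}^N:|x-y|<\delta\}$ contains the diagonal and is path-connected (any two of its points may be joined through a broken segment passing through the diagonal). The constancy of $p(x,y)$ and $s(x,y)$ on $\mathcal{N}_\delta$ therefore forces $p(x,x)\equiv p_0$ and $s(x,x)\equiv s_0$ on all of $\mathbb{R}^N$. In particular, $L^{p(x)}(\mathbb{R}^N)$ coincides with the classical $L^{p_0}(\mathbb{R}^N)$, and the diagonal critical exponent $p_s^*(x,x)$ collapses to the constant $p_0^* := Np_0/(N-s_0p_0)$. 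Because $p_0 = p(x,x)\leq p^+ \leq q^- \leq q^+ \leq p_0^*$, the relevant target exponents all lie in the closed interval $[p_0,p_0^*]$.

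The key analytic step is to establish the continuous embedding $W^{s(x,y),p(x,y)}(\mathbb{R}^N)\hookrightarrow L^{p_0^*}(\mathbb{R}^N)$. I would dominate the classical Gagliardo seminorm $[u]_{s_0,p_0;\mathbb{R}^N}$ by the variable-exponent modular $\varrho_{W^{s,p}}(u)$ plus the $L^{p_0}$-norm, by splitting the integration region into $\{|x-y|<\delta\}$ and $\{|x-y|\geq\delta\}$. On the near-diagonal part, the integrand coincides identically with that of the variable-exponent modular by the constancy hypothesis, so that part is at most $\varrho_{W^{s,p}}(u)$. On the far part the kernel $|x-y|^{-(N+s_0p_0)}$ is singularity-free, and the elementary bound $|u(x)-u(y)|^{p_0}\leq 2^{p_0}(|u(x)|^{p_0}+|u(y)|^{p_0})$ together with the finite radial integral $\int_{|z|\geq\delta}|z|^{-(N+s_0p_0)}\,dz<\infty$ produces a contribution of the form $C(\delta)\|u\|_{L^{p_0}}^{p_0}$. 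The classical fractional Sobolev embedding $W^{s_0,p_0}(\mathbb{R}^N)\hookrightarrow L^{p_0^*}(\mathbb{R}^N)$ (valid since $s_0p_0<N$) then converts this Gagliardo control into the desired $L^{p_0^*}$-estimate.

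With $W^{s(x,y),p(x,y)}(\mathbb{R}^N)\hookrightarrow L^{p_0}(\mathbb{R}^N)\cap L^{p_0^*}(\mathbb{R}^N)$ in hand, log-convexity of $L^r$-norms yields $\|u\|_{L^r}\leq \|u\|_{L^{p_0}}^{\theta}\|u\|_{L^{p_0^*}}^{1-\theta}$ for every $r\in[p_0,p_0^*]$, so in particular $u\in L^{q^-}(\mathbb{R}^N)\cap L^{q^+}(\mathbb{R}^N)$ with norms controlled by $\|u\|_{W^{s(x,y),p(x,y)}}$. Decomposing $\mathbb{R}^N = \{|u|\leq 1\}\cup\{|u|>1\}$ yields the pointwise bound $|u(x)|^{q(x)}\leq |u(x)|^{q^-}+|u(x)|^{q^+}$, which integrates to
\[
\int_{\mathbb{R}^N}|u|^{q(x)}\,dx \leq \|u\|_{L^{q^-}}^{q^-}+\|u\|_{L^{q^+}}^{q^+},
\]
and the norm--modular unit ball property recalled in the preliminaries promotes this modular bound into the norm inequality $\|u\|_{L^{q(x)}}\leq C\|u\|_{W^{s(x,y),p(x,y)}}$.

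The main obstacle I anticipate is the bookkeeping between modulars and norms, which is nonlinear in variable-exponent spaces: the modular-level estimates above are essentially immediate, but turning them into a single continuous norm inequality requires careful propagation of the bounds $\|u\|^{p^-}-1\leq\varrho(u)\leq\|u\|^{p^+}+1$ (and the analogous estimates for the Gagliardo seminorm and for $L^{q(x)}$) through each step. The analytic ingredients themselves---the elementary splitting of the Gagliardo seminorm, the classical fractional Sobolev inequality, and log-convex $L^r$-interpolation---are standard; the only conceptual point is the observation that the constant-near-the-diagonal hypothesis forces the diagonal critical exponent to be a genuine constant, thereby reducing the problem to variable-exponent interpolation on a fixed compact interval of exponents.
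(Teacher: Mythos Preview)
Your argument is correct and considerably simpler than the paper's. The crucial observation---that the constancy hypothesis on the connected set $\mathcal{N}_\delta$ forces $p(x,x)\equiv p_0$ and $s(x,x)\equiv s_0$ to be genuine constants, hence $p_s^*(x,x)\equiv p_0^*$---is sound (and remains sound under weaker readings such as ``$p(x,y)=p(x,x)$ for $|x-y|<\delta$'', since symmetry then gives $p(x,x)=p(y,y)$ for $|x-y|<\delta$, and connectedness of $\mathbb{R}^N$ finishes). The near/far splitting of the constant-exponent Gagliardo modular, the classical embedding $W^{s_0,p_0}(\mathbb{R}^N)\hookrightarrow L^{p_0^*}(\mathbb{R}^N)$, and the pointwise bound $|u|^{q(x)}\leq |u|^{q^-}+|u|^{q^+}$ then close the argument; the modular-to-norm bookkeeping you flag is routine once one normalises $\|u\|_{W^{s(x,y),p(x,y)}}=1$ and uses $1$-homogeneity.

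The paper proceeds quite differently: it covers $\mathbb{R}^N$ by small cubes $Q_i$, uses log-H\"older continuity to compare $[v]_{s(x,y),p(x,y);Q_i}$ with $[v]_{s_i^-,p_i^-;Q_i}$, applies the constant-exponent embedding on each cube via an extension operator, sums the resulting modular inequalities, and then removes an auxiliary pointwise condition (E1) by approximating $q$ from below and invoking Fatou. Your route bypasses all of this local-to-global machinery and, notably, never uses the log-H\"older hypothesis at all. What the paper's approach buys is robustness: its architecture is designed for genuinely variable diagonal exponents (as in the constant-$s$, variable-$p$ result of Ho--Kim that it generalises) and would survive if the ``constant near the diagonal'' assumption were relaxed to something weaker. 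What your approach buys is clarity about the stated theorem: it shows that the hypothesis, taken literally, collapses the diagonal critical exponent to a constant and renders the embedding essentially classical---a point the paper's more elaborate proof obscures.
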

\begin{proof}
First, we impose the following additional assumption on $p(x,y),s(x,y)$:
\begin{itemize}
\item[(E1)] For any $x\in\mathbb{R}^N$, there exists $\varepsilon=\varepsilon(x)>0$ such that
\[
q|_{B_\varepsilon(x)}^{+}\leq\frac{Np|_{B_\varepsilon(x)\times B_\varepsilon(x)}^{-}}{N-p|_{B_\varepsilon(x) \times B_\varepsilon(x)}^{-} s|_{B_\varepsilon(x) \times B_\varepsilon(x)}^{-}}.
\]
\end{itemize}
Let us cover $\mathbb{R}^N$ with a family $\{Q_i\}_{i\in\mathbb{N}}$ of countably many closed cubes with sides of length $\varepsilon<1$ so small that
\[
c_{p,\varepsilon}\coloneqq\sup_{0<|x-y|<1/2}|p(x,y)-p|_{B_\varepsilon(x)\times B_\varepsilon(y)}^{-}|\log\frac{1}{|x-y|}<\infty,
\]
\[
c_{s,\varepsilon}\coloneqq\sup_{0<|x-y|<1/2}|s(x,y)-s|_{B_\varepsilon(x)\times B_\varepsilon(y)}^{-}| \log\frac{1}{|x-y|} <\infty.
\]
By (E1) and uniform continuity, taking $\varepsilon>0$ small enough, we also have
\[
q_i^{+}\leq\frac{Np_i^{-}}{N-p_i^{-}s_i^{-}},
\]
where $q_i^{\pm}\coloneqq q|_{Q_i}^{\pm}$, $p_i^{\pm}\coloneqq p|_{Q_i}^{\pm}$, $s_i^{\pm}\coloneqq s|_{Q_i}^{\pm}$. Let $v\in W^{s(x,y),p(x,y)}(\mathbb{R}^N)$ and $i\in\mathbb{N}$. Define a measure $\mu_i$ and a function $F$ as follows:
\[
d\mu(x,y)\coloneqq\frac{dxdy}{|x-y|^{N-s_i^{-}p_i^{-}}},\quad F(x,y)\coloneqq \frac{|v(x)-v(y)|}{|x-y|^{2s_i^{-}}}.
\]
Since
\begin{align*}
&\phantom{\coloneqq} \sup_{(x,y):x\neq y}|x-y|^{-(s(x,y)-s_i^{-})p(x,y)-s_i^{-}(p(x,y)-p_i^{-})} \\
&= \sup_{(x,y):x\neq y} e^{-(s(x,y)-s_i^{-})p(x,y)\log|x-y|-s_i^{-}(p(x,y)-p_i^{-})\log|x-y|} \\
&\leq \sup_{(x,y):x\neq y} e^{-p^{+}(s(x,y)-s_i^{-})\log|x-y|-s^{+}(p(x,y)-p_i^{-})\log|x-y|} \eqqcolon C_L<\infty,
\end{align*}
by the definition of $[\cdot]_{s(x),p(x);Q_i}$, we have
\begin{align*}
1&=\int_{Q_i}\int_{Q_i}\frac{|v(x)-v(y)|^{p(x,y)}}{[v]_{s,p;Q_i}^{p(x,y)}|x-y|^{N+s(x,y)p(x,y)}}dxdy \\
&= \int_{Q_i}\int_{Q_i}\left|\frac{F(x,y)}{[v]_{s,p;Q_i}}\right|^{p(x,y)}\frac{1}{|x-y|^{-(s(x,y)-s_i^{-})p(x,y)-s_i^{-}(p(x,y)-p_i^{-})}} d\mu_i(x,y) \\
&\geq (C_L+1)^{-1} \int_{Q_i}\int_{Q_i}\left|\frac{F(x,y)}{[v]_{s,p;Q_i}}\right|^{p(x,y)}d\mu_i(x,y) \\
&\geq \int_{Q_i}\int_{Q_i}\left|\frac{F(x,y)}{(C_L+1)^{1/p_i^{-}} [v]_{s,p;Q_i}}\right|^{p(x,y)}d\mu_i(x,y).
\end{align*}
Therefore, $\|F\|_{L_{\mu_i}^{p(x,y)}(Q_i\times Q_i)}\leq (C_L+1)^{1/p_i^{-}} [v]_{s,p;Q_i}$. On the other hand,
\begin{align*}
\|F\|_{L_{\mu_i}^{p_i^{-}}(Q_i\times Q_i)}&= \left( \int_{Q_i}\int_{Q_i}\left|\frac{|v(x)-v(y)|}{|x-y|^{2s_i^{-}}}\right|^{p_i^{-}}\frac{dxdy}{|x-y|^{N-s_i^{-}p_i^{-}}} \right)^{1/p_i^{-}} \\
&= \left( \int_{Q_i}\int_{Q_i}\frac{|v(x)-v(y)|^{p_i^{-}}}{|x-y|^{N+s_i^{-}p_i^{-}}} dxdy \right)^{1/p_i^{-}}=[v]_{s_i^{-},p_i^{-};Q_i}
\end{align*}
and since $p_i^{-}\leq p|_{Q_i\times Q_i}$, by Corollary 3.3.4 in \cite{Diening},
\begin{align*}
\|F\|_{L_{\mu_i}^{p_i^{-}}(Q_i\times Q_i)}&\leq 2(1+\mu_i (Q_i\times Q_i)) \|F\|_{L_{\mu_i}^{p(x,y)}(Q_i\times Q_i)} \\
&\leq  2\left(1+|Q_i|\cdot\frac{N|B_1|\cdot 2^{s_i^{-}p_i^{-}}}{s_i^{-}p_i^{-}}\right) \|F\|_{L_{\mu_i}^{p(x,y)}(Q_i\times Q_i)}.
\end{align*}
Combining these, we obtain $[v]_{s_i^{-},p_i^{-};Q_i}\leq C [v]_{s(x),p(x);Q_i}$. Since $p_i^{-}\leq p|_{Q_i}$, we also have $\|v\|_{L^{p_i^{-}}(Q_i)}\leq 2(1+|Q_i|) \|v\|_{L^{p(x)}(Q_i)}$. By summing them, we get $\|v\|_{W^{s_i^{-},p_i^{-}}(Q_i)}\leq \|v\|_{W^{s(\cdot),p(\cdot)}(Q_i)}$. \par
As in the proof of Theorem 3.5 in \cite{uniformly_continuous}, we can deduce the existence of an extension $\tilde{v}\in W^{s_i^{-},p_i^{-}}(\mathbb{R}^N)$ with compact support such that $\tilde{v}=v$ in $Q_i$, and $\|v\|_{L^{q_i^{+}}(Q_i)}\leq\| \tilde{v}\|_{L^{(p_i^{-})_{s_i^{-}}^*}(\mathbb{R}^N)}\leq C\|v\|_{W^{s_i^{-},p_i^{-}}(Q_i)}$. On the other hand, $\|v\|_{L^{q(x)}(Q_i)}\leq 2(1+|Q_i|) \|v\|_{L^{q_i^{+}}(Q_i)}$. Combining these, we obtain $\|v\|_{L^{q(x)}(Q_i)}\leq C \|v\|_{W^{s(\cdot),p(\cdot)}(Q_i)}$. \par
In order to prove the embedding, by the closed graph theorem and the homogeneity of the inequality, it suffices to show that we have $\varrho_{L^{q(x)}(\mathbb{R}^N)}(v)<\infty$ for any $v\in W^{s(x,y),p(x,y)}(\mathbb{R}^N)$ such that $\|v\|_{W^{s(x,y),p(x,y)}}=1$. Take such $v$ arbitrarily. Then, $\varrho_{Q_i}(v)\leq \varrho_{W^{s(\cdot),p(\cdot)}}=1$ and thus $\|v\|_{W^{s(\cdot),p(\cdot)}(Q_i)}\leq 1$. 
\begin{align*}
\int_{Q_i}|v|^{q(x)}dx&\leq \max\{\|v\|_{L^{q(x)}(Q_i)}^{q_i^{+}}, \|v\|_{L^{q(x)}(Q_i)}^{q_i^{-}}\}\\
&\leq C\|v\|_{W^{s(\cdot),p(\cdot)}(Q_i)}^{q_i^{-}} \\
&\leq C\varrho_{W^{s(\cdot),p(\cdot)}(Q_i)}(v)^{q_i^{-}/p_i^{+}} \\
&\leq C\varrho_{W^{s(\cdot),p(\cdot)}(Q_i)}(v).
\end{align*}
Summing up for all $i$, we obtain 
\begin{equation}\label{E}
\varrho_{L^{q(x)}(\mathbb{R}^N)}(v)\leq C\varrho_{W^{s(\cdot),p(\cdot)}(\mathbb{R}^N)}(v) <\infty.
\end{equation}
Note that we can take $C$ in \eqref{E} depending only on $N$, $p^{-}$, $p^{+}$, $s^{-}$, $s^{+}$ and $C_L$. Such a constant does not explicitly depend on $\varepsilon$. $C_L$ depends only on $p^{-}$, $p^{+}$. \par
Next, we consider the general case without the condition (E1). Take any $v\in C_c^\infty(\mathbb{R}^N)$. Let $\{q_n\}$ be a sequence satisfying (E1) with $\varepsilon=\varepsilon_n=o_n(1)$ and approximating $q_c$ pointwise from below. By Fatou's lemma (more precisely by Corollary 3.5.4 in \cite{Diening}), $\displaystyle \|v\|_{q_c(x)}\leq\liminf_{n\to\infty}\|v\|_{q_n(x)}$. Since we can take $C$ in \eqref{E} independent of $\varepsilon$, we get $\|v\|_{q_c(x)}\leq C \|v\|_{W^{s(x,y),p(x,y)}}$. We can conclude by a density argument. 
\end{proof}

\section{A new concentration compactness lemma}
\begin{lemma} \label{CC}
Suppose (S), (P) and (QC). Let $\{u_n\}\subset W^{s(x,y),p(x,y)}(\mathbb{R}^N)$ be a bounded sequence converging to some $u$ weakly in $W^{s(x,y),p(x,y)}(\mathbb{R}^N)$ and almost everywhere. 
Assume
\begin{align*}
\displaystyle\int_{\mathbb{R}^N} \frac{|u_n(x)-u_n(y)|^{p(x,y)}}{|x-y|^{N+p(x,y)s(x,y)}}dy& \xrightharpoonup[]{\ast} \mu \\
|u_n|^{q_c (x)} & \xrightharpoonup[]{\ast} \nu \\
\int_{\mathbb{R}^N}\frac{|u_n(x)|^{r(x)}|u_n(y)|^{r(y)}}{|x-y|^{\alpha(x,y)}}dy & \xrightharpoonup[]{\ast} \xi
\end{align*}
in the sense of the vague convergence. 
Then, there exist an at most countable set $\mathcal{I}$, a family of points $\{x_i\}_{i\in \mathcal{I}}\subset\mathbb{R}^N$ and families of nonnegative numbers $\{\xi_i\}_{i\in \mathcal{I}}, \{\mu_i\}_{i\in \mathcal{I}}, \{\nu_i\}_{i\in \mathcal{I}}\subset \mathbb{R}$ such that
\begin{align}
\xi &= \int_{\mathbb{R}^N} \frac{|u(x)|^{r(x)}|u(y)|^{r(y)}}{|x-y|^{\alpha(x,y)}} dy+\sum_{i\in \mathcal{I}}\xi_i\delta_{x_i}; \label{xi} \\
\mu &\geq \int_{\mathbb{R}^N} \frac{|u(x)-u(y)|^{p(x,y)}}{|x-y|^{N+p(x,y)s(x,y)}} dy +\sum_{i\in \mathcal{I}}\mu_i \delta_{x_i}; \label{mu} \\
\nu &=|u|^{q_c(x)}+\sum_{i\in \mathcal{I}}\nu_i\delta_{x_i}. \label{nu}
\end{align}
Moreover, such numbers satisfy the following quantitative relations:
\begin{align}
\xi_i&\leq 2S_K^{-1}\max\{\mu_i^{2r^{+}/p^{+}}, \mu_i^{2r^{+}/p^{-}}, \mu_i^{2r^{-}/p^{+}}, \mu_i^{2r^{-}/p^{-}}\}; \label{SK quantity}\\
\xi_i&\leq 2L_K^{-1}\nu_i^{2/\sigma_\alpha^{+}}; \label{LK quantity}\\
\nu_i&\leq S_D^{-p_s^*(x_i)}\mu_i^{p_s^*(x_i)/p(x_i)}, \label{SD quantity}
\end{align}
where
\begin{align*}
S_K&\coloneqq \inf_{v\in W^{s(x,y),p(x,y)} (\mathbb{R}^N)\setminus\{0\}}\frac{\|v\|_{W^{s(x,y),p(x,y)}}^{2 r^{+}}+\|v\|_{W^{s(x,y),p(x,y)}}^{2 r^{-}}}{\int_{\mathbb{R}^N\times \mathbb{R}^N} \frac{|v(x)|^{r(x)}|v(y)|^{r(y)}}{|x-y|^{\alpha(x,y)}} dxdy}>0;\\
L_K&\coloneqq\inf_{v\in W^{s(x,y),p(x,y)} (\mathbb{R}^N)\setminus\{0\}}\frac{\||v|^{r(x)}\|_{\sigma^{+}_\alpha}^{2} +\||v|^{r(x)}\|_{\sigma^{-}_\alpha}^{2}}{\int_{\mathbb{R}^N\times \mathbb{R}^N} \frac{|v(x)|^{r(x)}|v(y)|^{r(y)}}{|x-y|^{\alpha(x,y)}} dxdy}>0;\\
S_D&\coloneqq \inf_{v\in W^{s(x,y),p(x,y)} (\mathbb{R}^N)\setminus\{0\}}\frac{[v]_{s(x,y),p(x,y)}}{\|v\|_{q_c(x)}}>0.
\end{align*}
Furthermore, we have
\begin{align}
\limsup_{n\to\infty}\int_{\mathbb{R}^N} \int_{\mathbb{R}^N} \frac{|u_n(x)|^{r(x)}|u_n(y)|^{r(y)}}{|x-y|^{\alpha(x,y)}}dxdy&=\xi(\mathbb{R}^N)+\xi_\infty;\label{mass conservation for xi}\\
\limsup_{n\to\infty} \int_{\mathbb{R}^N} \int_{\mathbb{R}^N} \frac{|u_n(x)-u_n(y)|^{p(x,y)}}{|x-y|^{N+p(x,y)s(x,y)}}dxdy&=\mu(\mathbb{R}^N)+\mu_\infty; \label{mass conservation for mu}\\
\limsup_{n\to\infty}\int_{\mathbb{R}^N} |u_n|^{q_c(x)} dx &= \nu(\mathbb{R}^N)+\nu_\infty, \label{mass conservation for nu}
\end{align}
where \textit{the escaping parts} $\mu_\infty,\nu_\infty,\xi_\infty$ are defined as
\begin{align}
\mu_\infty&\coloneqq\lim_{R\to\infty}\limsup_{n\to\infty}\int_{\{|x|\geq R\}} \left(\int_{\mathbb{R}^N}\frac{|u_n(x)-u_n(y)|^{p(x,y)}}{|x-y|^{N+p(x,y)s(x,y)}}dy\right)dx; \\
\nu_\infty&\coloneqq \lim_{R\to\infty}\limsup_{n\to\infty}\int_{\{|x|\geq R\}} |u_n|^{q_c(x)} dx; \\
\xi_\infty&\coloneqq \lim_{R\to\infty}\limsup_{n\to\infty}\int_{\{|x|\geq R\}} \frac{|u_n(x)|^{r(x)}|u_n(y)|^{r(y)}}{|x-y|^{\alpha(x,y)}} dx.
\end{align}
In addition, if $\displaystyle p_\infty\coloneqq \lim_{|x|,|y|\to\infty}p(x,y)$ and $\displaystyle (q_c)_\infty \coloneqq \lim_{|x|\to\infty}q_c(x)$ exist, 
\begin{align}
S_{D}^{(q_c)_\infty}\nu_\infty&\leq\mu_\infty^{(q_c)_\infty/p_\infty}. \label{mugeqnuatinfty}
\end{align}
\end{lemma}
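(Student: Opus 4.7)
I would follow the classical Lions concentration--compactness scheme, adapted to the variable-exponent, variable-order fractional setting (as in \cite{s_variable_exponent_CC}) and extended to handle the nonlocal Choquard measure $\xi$. Replacing $u_n$ by $v_n:=u_n-u$ and applying variable-exponent Brezis--Lieb type decompositions to $|\cdot|^{q_c(x)}$, to the Gagliardo $(s,p)$-modular, and to the Choquard double integral reduces the proofs of the decompositions \eqref{xi}--\eqref{nu} to the case $u\equiv 0$. In that case, only the singular parts of $\mu,\nu,\xi$ remain, and one must show that they are concentrated on a common at most countable set of atoms $\{x_i\}$.

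\textbf{Decomposition and quantitative inequalities.} For each $\phi\in C_c^\infty(\mathbb{R}^N)$ I would test the critical embedding $[\cdot]_{s(x,y),p(x,y)}\geq S_D\|\cdot\|_{q_c(x)}$ against $\phi v_n$; a cutoff estimate on the Gagliardo seminorm $[\phi v_n]_{s(x,y),p(x,y)}$, obtained by expanding $|\phi(x)v_n(x)-\phi(y)v_n(y)|^{p(x,y)}$ and controlling the cross terms via the variable-exponent H\"older inequality, lets one pass to the limit and obtain, for $\phi$ supported near a single atom $x_0$, a reverse-H\"older type inequality $\|\phi\|_{L^{q_c(x)}(\nu)}\leq S_D^{-1}\|\phi\|_{L^{p(x,y)}(\mu)}$ up to lower-order terms. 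Since $q_c^->p^+$, the standard Lions argument then forces $\nu-|u|^{q_c(x)}$ to be purely atomic with the estimate \eqref{SD quantity} at each atom; the inequality \eqref{mu} for $\mu$ follows from the weak lower semicontinuity of the Gagliardo modular. For $\xi$, I would test the defining variational formulae of $S_K$ and $L_K$ against $\phi v_n$ and bound the resulting Choquard double integral via the Hardy--Littlewood--Sobolev inequality (Proposition~\ref{HLS}), obtaining \eqref{SK quantity} and \eqref{LK quantity} in the limit. The identity \eqref{xi} follows because the condition $r(x)\sigma_\alpha^{\pm}\leq q_c(x)$, combined with \eqref{LK quantity}, forces every atom of $\xi$ to sit at an atom of $\nu$, and conversely.

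\textbf{Mass at infinity.} The conservation identities \eqref{mass conservation for xi}--\eqref{mass conservation for nu} are obtained by splitting each integral over $\mathbb{R}^N$ into the piece on $B_R$ and its complement, applying vague convergence on $B_R$, and identifying the complementary piece (after $\limsup_n$ and $R\to\infty$) with the escaping parts $\mu_\infty,\nu_\infty,\xi_\infty$. For \eqref{mugeqnuatinfty}, I would apply the critical embedding to $\eta_R u_n$, where $\eta_R\in C^\infty(\mathbb{R}^N)$ vanishes on $B_R$ and equals $1$ outside $B_{2R}$: because $p(x,y)\to p_\infty$ and $q_c(x)\to (q_c)_\infty$ uniformly on $\mathrm{supp}\,\eta_R$ as $R\to\infty$, the exponents are essentially constant there, and the inequality $[\eta_R u_n]_{s,p}\geq S_D\|\eta_R u_n\|_{q_c}$, combined with the norm--modular unit ball property, yields \eqref{mugeqnuatinfty} after successively taking $n\to\infty$ and $R\to\infty$.

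\textbf{Main obstacle.} The most delicate step, I expect, is the handling of the nonlocal measure $\xi$: since the defining double integral couples $x$ and $y$, a naive cutoff argument does not isolate the contribution of a single atom and risks picking up cross interactions between distinct concentration points. Carrying out the localization requires a two-scale product cutoff together with the HLS inequality in its variable-exponent form, and a careful bookkeeping of exponents in the Brezis--Lieb decomposition for the Choquard term, where the pointwise convergence $v_n\to 0$ a.e.\ does not translate to convergence of the full double integral as cleanly as in the constant-exponent case.
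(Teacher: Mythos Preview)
Your overall strategy matches the paper's: Brezis--Lieb reduction to $v_n=u_n-u$, the $\mu,\nu$ parts handled by the known variable-exponent concentration--compactness arguments of \cite{Ho_Sim,variable_exponent_CC,s_variable_exponent_CC}, and the escaping-mass identities via the cutoff $\eta_R=1-\phi(\cdot/R)$. On those points your sketch is essentially what the paper does.

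The gap is in your treatment of $\xi$. You propose a ``two-scale product cutoff'' and invoke HLS, but the actual issue is not bounding the Choquard integral of $\phi v_n$; it is \emph{relating} that integral to $\int_{\mathbb{R}^N}|\phi|^{2r(x)}\,d\xi_n$, where $d\xi_n(x)=\big(\int_{\mathbb{R}^N}\frac{|v_n(x)|^{r(x)}|v_n(y)|^{r(y)}}{|x-y|^{\alpha(x,y)}}dy\big)dx$. The paper's key step is to show that the difference
\[
\Gamma_n(x)=\Big(\int_{\mathbb{R}^N}\frac{|\phi(y)|^{r(y)}-|\phi(x)|^{r(x)}}{|x-y|^{\alpha(x,y)}}|v_n(y)|^{r(y)}\,dy\Big)|\phi(x)v_n(x)|^{r(x)}
\]
satisfies $\int_{\mathrm{supp}\,\phi}|\Gamma_n|\,dx\to 0$. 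This is where hypothesis (R), namely $r\in C^1$, is actually used: it makes $x\mapsto|\phi(x)|^{r(x)}$ a $C^1_c$ function, so $\frac{|\phi(y)|^{r(y)}-|\phi(x)|^{r(x)}}{|x-y|}$ is bounded, and the remaining kernel $|x-y|^{1-\alpha(x,y)}$ can be handled by Young's inequality for weak Lebesgue spaces together with Vitali's theorem. Without this commutator-type estimate your localization scheme does not close, and a mere product cutoff in $(x,y)$ does not produce an expression of the form $\int|\phi|^{2r}\,d\xi_n$ either.

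A second point you gloss over is \emph{why} the atoms of $\tilde\xi$ lie among those of $\tilde\nu$. It is not a formal consequence of \eqref{LK quantity}; rather, once the commutator estimate gives
\[
\|\phi\|_{L^{2r(x)}_{\tilde\xi}}^{2r^{\pm}}\leq L_K^{-1}\big(\|\phi v_n\|_{L^{r(x)\sigma_\alpha^{+}}}^{2r^{\pm}}+\|\phi v_n\|_{L^{r(x)\sigma_\alpha^{-}}}^{2r^{\pm}}\big)+o_n(1),
\]
the local compact embedding forces the right-hand side to vanish unless $\mathrm{supp}\,\phi$ meets the critical set $\mathcal{A}=\{r(x)\sigma_\alpha^{+}=p_s^*(x)\}$; on $\mathcal{A}$ the exponent $r(x)\sigma_\alpha^{+}$ coincides with $q_c(x)$, so the vague limit of $|v_n|^{r(x)\sigma_\alpha^{+}}$ agrees with $\tilde\nu$ at those points, which is what yields \eqref{LK quantity} with $\nu_i$ on the right. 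This identification step is missing from your outline.
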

\begin{proof}
Let $v_n\coloneqq u_n-u$. By the variable-exponent Brezis-Lieb type splitting properties (whose proof is omitted since it is obtained in the same way as in the case of constant exponents), we obtain
\begin{align*}
\int_{\mathbb{R}^N}\frac{|v_n(x)-v_n(y)|^{p(x,y)}}{|x-y|^{N+p(x,y)s(x,y)}}dy &\to \tilde{\mu}\coloneqq \mu-\int_{\mathbb{R}^N}\frac{|u(x)-u(y)|^{p(x,y)}}{|x-y|^{N+p(x,y)s(x,y)}}dy \\
|v_n|^{q_c(x)} &\to \tilde{\nu}\coloneqq\nu -|u|^{q_c(x)} \\
\
\int_{\mathbb{R}^N}\frac{|v_n(x)|^{r(x)}|v_n(y)|^{r(y)}}{|x-y|^{\alpha(x,y)}}dy &\to \xi-\int_{\mathbb{R}^N}\frac{|u(x)|^{r(x)}|u(y)|^{r(y)}}{|x-y|^{\alpha(x,y)}}dy
\end{align*}
in the vague topology. \par
By a similar way as the proof of Theorem 3.3 in \cite{Ho_Sim}, Theorem 3.3 in \cite{variable_exponent_CC} or Theorem 2.2 in \cite{Fu_Zhang}, we obtain the existence of at most countably many distinct points $\{x_i\}_{i\in\mathcal{I}'}\subset \mathbb{R}^N$ and $\{\mu_i\}_{i\in\mathcal{I}'}, \{\nu_i\}_{i\in\mathcal{I}'}\subset (0,\infty)$ such that \eqref{mu}, \eqref{nu} and \eqref{SD quantity} holds. \par
Take any $\varphi\in C_c^\infty(\mathbb{R}^N)$. We observe 
\begin{align*}
&\phantom{=}\left|\int_{\mathbb{R}^N} \int_{\mathbb{R}^N}\frac{|\varphi(x)v_n(x)|^{r(x)}|\varphi(y)v_n(y)|^{r(y)}}{|x-y|^{\alpha(x,y)}}dxdy \right.\\
&\phantom{=}\quad\quad\left. -\int_{\mathbb{R}^N} |\varphi(x)|^{2 r(x)}\int_{\mathbb{R}^N}\frac{|v_n(x)|^{r(x)}|v_n(y)|^{r(y)}}{|x-y|^{\alpha(x,y)}}dydx\right| \\
&=\left| \int_{\mathbb{R}^N}\Gamma_n (x)dx\right|= \left| \int_{\operatorname{supp}\varphi}\Gamma_n (x)dx\right|,
\end{align*}
where
\begin{align*}
\Gamma_n(x)&\coloneqq \int_{\mathbb{R}^N}\frac{|\varphi(y)|^{r(y)}-|\varphi(x)|^{r(x)}}{|x-y|^{\alpha(x,y)}} |v_n(y)|^{r(y)} dy\cdot |\varphi(x)v_n(x)|^{r(x)} \\
&=\left(\int_{|y|\leq R}\frac{|\varphi(y)|^{r(y)}-|\varphi(x)|^{r(x)}}{|x-y|^{\alpha(x,y)}} |v_n(y)|^{r(y)} dy \right.\\
&\phantom{=}\quad\quad\left.-|\varphi(x)|^{r(x)}\int_{|y|> R}\frac{|v_n(y)|^{r(y)}}{|x-y|^{\alpha(x,y)}} dy\right) |\varphi(x)v_n(x)|^{r(x)}
\end{align*}
for $R>0$ sufficiently large. 
Since $x\mapsto |\varphi(x)|^{r(x)}$ is differentiable and has a compact support, $\displaystyle (x,y)\mapsto \frac{|\varphi(y)|^{r(y)}-|\varphi(x)|^{r(x)}}{|x-y|}$ is bounded. By the same technique as Proposition 2.4 in \cite{variable_exponent_HLS} and the Young's inequality for the weak Lebesgue space, we get
\begin{align*}
&\phantom{=}\left\|\int_{|y|\leq R}\frac{|v_n(y)|^{r(y)}}{|x-y|^{\alpha(x,y)-1}} dy\right\|_{L^{\theta}(\operatorname{supp}\varphi)} \\
&\leq \left\|\int_{|y|\leq R}\frac{|v_n(y)|^{r(y)}}{|x-y|^{\alpha^{-}-1}} dy\right\|_{L^{\theta}(\operatorname{supp}\varphi)} +\left\|\int_{|y|\leq R}\frac{|v_n(y)|^{r(y)}}{|x-y|^{\alpha^{+}-1}} dy\right\|_{L^{\theta}(\operatorname{supp}\varphi)}\\
&\leq C\||\cdot|^{1-\alpha^{-}}\|_{L^{q_\alpha} (\operatorname{supp}\varphi-B_R(0))}\||v_n|^{r(x)}\|_{\sigma_\alpha^{+}}\leq C'
\end{align*}
for $\theta\geq 1$ sufficiently large such that
\[
1+\frac{1}{\theta}=\frac{1}{q_\alpha}+\frac{1}{\sigma_\alpha^{+}}
\]
for some $q_\alpha$ such that
\[
1\leq q_\alpha<\min\left\{\frac{N}{\max\{\alpha^{+}-1,0\}}, \frac{N}{\max\{\alpha^{-}-1,0\}}\right\},
\]
where we interpret as $N/0=\infty$. On the other hand, 
\begin{align*}
&\phantom{=}\left\| |\varphi(x)|^{r(x)}\int_{|y|> R}\frac{|v_n(y)|^{r(y)}}{|x-y|^{\alpha(x,y)}} dy\right\|_{L^{\theta}(\operatorname{supp}\varphi)} \\
&\leq \left\| |\cdot|^{-\alpha^{-}}\wedge\sup_{x\in \operatorname{supp}\varphi, |y|>R}|x-y|^{-\alpha(x,y)}\right\|_{q_K}\||v_n|^{r(x)}\|_{\sigma_\alpha^{+}}\||\varphi|^{r(x)}\|_{q_\varphi}\leq C
\end{align*}
for $q_K\in (N/\alpha^{-},\infty]$ and $q_\varphi\geq 1$ such that
\[
1+\frac{1}{\theta}=\frac{1}{q_K}+\frac{1}{\sigma_\alpha^{+}}+ \frac{1}{q_\varphi}.
\]
Combining these, by the H\"{o}lder's inequality, we obtain
\begin{align*}
&\phantom{=}\|\Gamma_n\|_{L^{q_\Gamma}(\operatorname{supp}\varphi)} \\
&\leq \left\| \int_{\mathbb{R}^N}\frac{|\varphi(y)|^{r(y)}-|\varphi(x)|^{r(x)}}{|x-y|^{\alpha(x,y)}} |v_n(y)|^{r(y)} dy\right\|_{L^{\theta}(\operatorname{supp}\varphi)}\||v_n|^{r(x)}\|_{\sigma_\alpha^{+}}\leq C
\end{align*}
for $q_\Gamma>1$ such that $\displaystyle \frac{1}{q_\Gamma}=\frac{1}{\theta}+\frac{1}{\sigma_\alpha^{+}}$. Therefore, by the Vitali convergence theorem and the fact that $\Gamma_n\to 0$ a.e. in $\mathbb{R}^N$, we obtain $\displaystyle \int_{\operatorname{supp}\varphi}|\Gamma_n|dx\to 0$.\par
By Proposition \ref{HLS} and the definition of $L_K$, this implies
\begin{align*}
&\phantom{=}L_K\int_{\mathbb{R}^N}|\varphi|^{2r(x)}\left(\int_{\mathbb{R}^N}\frac{|v_n(x)|^{r(x)}|v_n(y)|^{r(y)}}{|x-y|^{\alpha(x,y)}}dy\right)dx \\
&\leq \||\varphi v_n|^{r(x)}\|_{\sigma_\alpha^{+}}^2+ \||\varphi v_n|^{r(x)}\|_{\sigma_\alpha^{-}}^2+o_n(1)
\end{align*}
as $n\to\infty$. From this,
\begin{equation}\label{pre LK quantity}
\begin{split}
&\phantom{=} \min_{\pm}\| \varphi \|_{L^{2 r(x)}_{\tilde{\xi}}(\operatorname{supp}\varphi)}^{2 r|_{\operatorname{supp}\varphi}^{\pm}} \leq \| |\varphi|^{2r(x)} \|_{L^1_{\tilde{\xi}}(\operatorname{supp}\varphi)} \\
& \leq L_K^{-1}\left(\max_{\pm}\|\varphi v_n\|_{L^{r(x)\sigma_\alpha^{+}}(\operatorname{supp}\varphi)}^{2 r|_{\operatorname{supp}\varphi}^{\pm}}+ \max_{\pm}\|\varphi v_n\|_{L^{r(x)\sigma_\alpha^{-}}(\operatorname{supp}\varphi)}^{2 r|_{\operatorname{supp}\varphi}^{\pm}}\right)+o_n(1) \\
&= L_K^{-1}\left(\max_{\pm}\|\varphi\|_{L^{r(x)\sigma_\alpha^{+}}_{|v_n|^{r(x)\sigma_\alpha^{+}}}(\operatorname{supp}\varphi)}^{2 r|_{\operatorname{supp}\varphi}^{\pm}}+ \max_{\pm}\|\varphi\|_{L^{r(x)\sigma_\alpha^{-}}_{|v_n|^{r(x)\sigma_\alpha^{-}}}(\operatorname{supp}\varphi)}^{2 r|_{\operatorname{supp}\varphi}^{\pm}}\right)+o_n(1),
\end{split}
\end{equation}
where $\max_{\pm}A_\pm$ [resp. $\min_{\pm}A_\pm$] denotes $\max\{A_{+},A_{-}\}$ [resp. $\min\{A_{+},A_{-}\}$]. 
Arguing as the proof of Lemma 3.7 and Lemma 3.8 in \cite{variable_exponent_CC}, $\tilde{\xi}$ is also a linear combination $\sum_{i\in \mathcal{I}''}\xi_i\delta_{x_i}$ of at most countably many Dirac measures. We set $\mathcal{I}\coloneqq \mathcal{I}'\cup \mathcal{I}''$. 
For simplicity, we consider the case $\sigma_\alpha^{-}<\sigma_\alpha^{+}$. Let $\mathcal{A}\coloneqq \{x\in\mathbb{R}^N\mid r(x)\sigma_\alpha^{+}=p_s^*(x)\}$. If $\operatorname{supp}\varphi \subset \mathbb{R}^N\setminus \mathcal{A}$, by the local compact embedding, $\|\varphi v_n\|_{L^{r(x)\sigma_\alpha^{\pm}}(\operatorname{supp}\varphi)}\to 0$. Therefore, we may assume $\{x_i\}_{i\in\mathcal{I}''}\subset \mathcal{A}$. Take $\phi\in C_c^\infty(\mathbb{R}^N)$ such that $\chi_{B_1(0)}\leq \phi\leq \chi_{B_2(0)}$ and define $\phi_{a,\varepsilon}(x)\coloneqq\phi((x-a)/\varepsilon)$. By the Banach–Alaoglu theorem for the space of finite Radon measures, $\{|v_n|^{r(x) \sigma_\alpha^{+}}\}$ has a limit $\mathcal{V}$ up to a subsequence in the vague topology. For $x_0\in\mathcal{A}$, we have
\[
\lim_{\varepsilon\to +0}\lim_{n\to\infty}\int_{\mathbb{R}^N} | \phi_{x_0,\varepsilon} v_n|^{r(x) \sigma_\alpha^{+}}dx= \lim_{\varepsilon\to +0}\int_{\mathbb{R}^N} | \phi_{x_0,\varepsilon}|^{r(x) \sigma_\alpha^{+}}d\mathcal{V}=\mathcal{V}(x_0)=\tilde{\nu}(\{x_0\}).
\]
Choosing $\varphi=\phi_{x_i,\varepsilon}$ in \eqref{pre LK quantity} and taking the limit as $\varepsilon\to +0$ after $n\to\infty$, by the relation between the Luxemburg norm and the modular, we obtain
\[
\xi_i\leq 2L_K^{-1}(\nu_i^{\frac{1}{r(x_i)\sigma_{+}}})^{2r(x_i)}= 2L_K^{-1}\nu_i^{2/\sigma_{+}},
\]
that is, \eqref{LK quantity} holds. Similarly, noting that $\|\varphi v_n\|_{p(x)}\to 0$ by the local compact embedding, in the same way as in \cite{Sakuma}, we obtain
\begin{align*}
\xi_i&\leq S_K^{-1}(\max_{\pm}\mu_i^{2r^{+}/p^{\pm}}+ \max_{\pm}\mu_i^{2r^{-}/p^{\pm}}) \\
&\leq 2S_K^{-1}\max\{\mu_i^{2r^{+}/p^{+}}, \mu_i^{2r^{+}/p^{-}}, \mu_i^{2r^{-}/p^{+}}, \mu_i^{2r^{-}/p^{-}}\},
\end{align*}
that is, \eqref{SK quantity} holds. 
In order to show \eqref{mass conservation for xi}, \eqref{mass conservation for mu}, \eqref{mass conservation for nu}, it suffices to take the limit as $R\to\infty$ in
\begin{align*}
\limsup_{n\to\infty}\int_{\mathbb{R}^N}f_n dx &= \limsup_{n\to\infty}\int_{\mathbb{R}^N}f_n (x)(1-\phi(x/R))dx+ \lim_{n\to\infty}\int_{\mathbb{R}^N} \phi(x/R) (f_n dx)
\end{align*}
for each integrand $f_n$. We can also easily obtain \eqref{mugeqnuatinfty} by the definition of $S_D$ and considering $v=(1-\phi(x/R))u_n$. 
\end{proof}

\section{$(PS)_c$ condition}
\begin{lemma}\label{boundedness}
If $\{u_n\}$ is a $(PS)$ sequence for $I$, then $\{u_n\}$ is bounded uniformly in $\varepsilon_W\in (0,\varepsilon)$ for any $\varepsilon>0$. 
\end{lemma}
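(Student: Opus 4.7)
The plan is to evaluate $I[u_n]-\frac{1}{\tau}\langle I'[u_n],u_n\rangle$ for a carefully chosen exponent $\tau$ and compare the coercive lower bound in the modular $\varrho_E(u_n)$ it produces with the upper bound $C_0+o(1)\|u_n\|_E$ furnished by the $(PS)$ hypothesis. The target is a $\tau$ inside the window $[p^+\theta_M,\,\min\{2r^-, q_c^-\}]$; the strict inequality $\theta_M<2r^-$ from $(M)$, combined with $p^+<q_c^-$ from $(QC)$ and the fact $p^+<2r^-$ which follows from $(R)$ via $\sigma_\alpha^-<2$, is read so as to make this window nonempty.

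With such a $\tau$, writing $\mathcal{D}(u)$ for the Gagliardo numerator $\displaystyle\int_{\mathbb{R}^N}\int_{\mathbb{R}^N}|u(x)-u(y)|^{p(x,y)}|x-y|^{-(N+s(x,y)p(x,y))}dxdy$, the identity
\begin{align*}
I[u_n]-\frac{1}{\tau}\langle I'[u_n],u_n\rangle &=\left[\mathcal{M}(\mathcal{E}[u_n])-\frac{M(\mathcal{E}[u_n])\mathcal{D}(u_n)}{\tau}\right]+\int_{\mathbb{R}^N}\left(\frac{1}{p(x)}-\frac{1}{\tau}\right)V|u_n|^{p(x)}dx\\
&\quad +\int_{\mathbb{R}^N}\int_{\mathbb{R}^N}\left(\frac{1}{\tau}-\frac{1}{2r(x)}\right)\frac{|u_n(x)|^{r(x)}|u_n(y)|^{r(y)}}{|x-y|^{\alpha(x,y)}}dxdy\\
&\quad +\int_{\mathbb{R}^N}\left(\frac{1}{\tau}-\frac{1}{q_c(x)}\right)|u_n|^{q_c(x)}dx+\varepsilon_W\int_{\mathbb{R}^N}\left(\frac{1}{\tau}-\frac{1}{q_W(x)}\right)W|u_n|^{q_W(x)}dx
\end{align*}
has nonnegative integrand coefficients in the second through fourth lines; combining $\mathcal{D}(u)\leq p^+\mathcal{E}[u]$ with $(M)$ bounds the Kirchhoff bracket below by $(1-p^+\theta_M/\tau)\mathcal{M}(\mathcal{E}[u_n])\geq 0$, and then $\mathcal{M}(t)\geq(\inf M)\,t$, $\mathcal{E}[u_n]\geq\mathcal{D}(u_n)/p^+$, and $\inf V=\tau_0>0$ upgrade everything into $\geq c_1\varrho_E(u_n)$ for some $c_1>0$ independent of $n$ and $\varepsilon_W$.

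The only term of indefinite sign is the $\varepsilon_W$-contribution. I would dominate it by the variable-exponent H\"{o}lder inequality together with the integrability hypothesis $W\in L^{q_c(x)/(q_c(x)-q_W(x))}$ and the critical embedding $E\hookrightarrow L^{q_c(x)}(\mathbb{R}^N)$ proved in Section~3, obtaining
\[
\int_{\mathbb{R}^N} W|u_n|^{q_W(x)}dx\leq C_W\max\left\{\|u_n\|_E^{q_W^+},\,\|u_n\|_E^{q_W^-}\right\}
\]
with $C_W$ independent of $n$ and $\varepsilon_W$. Combining the coercive lower bound, the $(PS)$-upper bound, the modular-norm estimate $\varrho_E(u_n)\geq\|u_n\|_E^{p^-}-1$, and the hypothesis $\varepsilon_W<\varepsilon$, for $\|u_n\|_E\geq 1$ one arrives at
\[
c_1\|u_n\|_E^{p^-}-\varepsilon\, C_W\|u_n\|_E^{q_W^+}\leq C_0+o(1)\|u_n\|_E.
\]
Since $q_W^+<p^-$ by $(QW)$ and $(P)$, the left-hand side grows strictly faster than the right in $\|u_n\|_E$, forcing a uniform bound depending on $\varepsilon$ and the structural constants but not on $\varepsilon_W\in(0,\varepsilon)$.

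The main obstacle I expect is calibrating $\tau$ so that all five coefficients simultaneously carry the correct sign, and in particular verifying that $p^+\theta_M$ does not exceed $\min\{2r^-, q_c^-\}$; this must be read as an implicit sharpening of $(M)$, $(R)$ and $(QC)$ for the scheme to close. A secondary technical point is the variable-exponent modular/norm bookkeeping: one has to distinguish the cases $\|u_n\|_E\geq 1$ and $\|u_n\|_E\leq 1$ via the norm-modular propositions of Section~2, and to compose the power inequality there when estimating $\|\,|u_n|^{q_W(x)}\|_{L^{q_c(x)/q_W(x)}}$ in terms of $\|u_n\|_{L^{q_c(x)}}$.
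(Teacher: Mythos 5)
Your proof is the paper's proof: both evaluate $I[u_n]-\beta I'[u_n]u_n$ (the paper's $\beta$ is your $1/\tau$), verify that every coefficient except the $\varepsilon_W$-term has the favorable sign, estimate the $W$-term by H\"{o}lder and the critical embedding, and close the argument with $q_W^{+}<p^{-}$. Your worry about calibrating $\tau$ is also warranted and is in fact a shared issue: the paper writes $\beta\in(1/(2r^{-}),1/\theta_M)$, but the Kirchhoff bracket actually requires $\beta<1/(\theta_M p^{+})$ (since $\mathcal{E}'[u]u\leq p^{+}\mathcal{E}[u]$, not $\leq\mathcal{E}[u]$) and the $q_c$-term requires $\beta>1/(q_c)^{-}$, and the nonemptiness of the resulting window, i.e.\ $p^{+}\theta_M<\min\{2r^{-},(q_c)^{-}\}$, is not explicitly derived from (M), (R), (QC) — exactly the implicit sharpening you identify.
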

\begin{proof}
Since there exists a constant $\bar{c}$ such that $I[u_n]\leq \bar{c}$ and $I'[u_n]\to 0$, for $n$ sufficiently large and for $\beta\in (1/(2r^{-}),1/\theta_M)$, by the condition (M) and the H\"{o}lder type inequality, 
\begin{equation}\label{bounded}
\begin{split}
&\phantom{=}\bar{c}+o(\|u_n\|)\geq I[u_n]-\beta I'[u_n]u_n \\
&\geq \left(\frac{1}{\theta_M}-\beta\right)M(\mathcal{E}[u_n])\min\{[u_n]_{s(x,y),p(x,y)}^{p^{+}}, [u_n]_{s(x,y),p(x,y)}^{p^{-}}\}+\left(\frac{1}{p^{+}}-\beta\right)\int_{\mathbb{R}^N}V|u_n|^{p(x)}dx \\
&\phantom{=}\quad +\left(\beta-\frac{1}{2r^{-}}\right)K'[u_n]u_n+\left(\beta-\frac{1}{(q_c)^{-}}\right)\int_{\mathbb{R}^N}|u_n|^{q_c(x)}dx \\
&\phantom{=}\quad -\left(\frac{1}{q_W^{-}}-\beta\right) \varepsilon_W\int_{\mathbb{R}^N}W|u_n|^{q_W(x)}dx \\
&\geq C_1\min\{\|u_n\|^{p^{+}}, \|u_n\|^{p^{-}}\}-C_2\varepsilon_W\max\{\|u_n\|^{q_W^{+}}, \|u_n\|^{q_W^{-}}\}.
\end{split}
\end{equation}
Since $q_W^{+}<p^{-}$, this implies that $\{u_n\}$ is bounded (uniformly in $\varepsilon_W$). 
\end{proof}

\begin{lemma}\label{weaktoweakcontinuity}
If $u_n \rightharpoonup u$ in $E$, then $K'[u_n]\rightharpoonup K'[u]$, $W|u_n|^{q_W(x)-2}u_n \rightharpoonup W|u|^{q_W(x)-2}u$, $|u_n|^{q_c(x)-2}u_n\rightharpoonup |u|^{q_c(x)-2}u$ in $E$. 
\end{lemma}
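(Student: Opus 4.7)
The plan is to interpret these convergences as weak convergence in the dual space $E^*$ (under which each of $K'[u]$, $W|u|^{q_W(x)-2}u$ and $|u|^{q_c(x)-2}u$ is naturally a continuous linear functional on $E$ via duality pairing). Since $E$ is reflexive and $C_c^\infty(\mathbb{R}^N)$ is dense in $E$, it suffices to show (i) that each of the three sequences is bounded in $E^*$, and (ii) that the pairing with any fixed $v\in C_c^\infty(\mathbb{R}^N)$ converges to the expected value. First I would pass to a subsequence along which $u_n\to u$ almost everywhere and also strongly in $L^{q(\cdot)}_{\mathrm{loc}}(\mathbb{R}^N)$ for every $q\in C_+$ with $q(x)<p_s^*(x)$; this uses the local compact embedding from Section 3 applied on bounded Lipschitz subdomains, together with the boundedness of $\{u_n\}$ in $E$ coming from Lemma \ref{boundedness}.

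For the critical Sobolev nonlinearity $|u_n|^{q_c(x)-2}u_n$, the norm–modular relation shows the sequence is bounded in $L^{q_c(x)/(q_c(x)-1)}(\mathbb{R}^N)$, and almost-everywhere convergence combined with reflexivity of this variable-exponent Lebesgue space upgrades to weak convergence to $|u|^{q_c(x)-2}u$; the continuous embedding $E\hookrightarrow L^{q_c(x)}(\mathbb{R}^N)$ given by (QC) then yields the desired weak convergence in $E^*$. For the weighted subcritical term $W|u_n|^{q_W(x)-2}u_n$, I would split the pairing against $v$ into $B_R$ and $\mathbb{R}^N\setminus B_R$: on $B_R$ the compact embedding $E\hookrightarrow L^{q_W(x)}(B_R)$ (since $q_W^+<p^-<q_c$) together with continuity of the Nemytskii map $u\mapsto |u|^{q_W(\cdot)-2}u$ gives strong convergence, while on the complement the global integrability $W\in L^{q_c(x)/(q_c(x)-q_W(x))}(\mathbb{R}^N)$ lets me make the tail arbitrarily small by choosing $R$ large, using the Hölder decomposition $1=(q_c-q_W)/q_c+(q_W-1)/q_c+1/q_c$.

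The main obstacle is the Choquard term, due to its nonlocal double-integral structure and the doubly critical growth. For $v\in C_c^\infty(\mathbb{R}^N)$, write
\[
K'[u_n]v \;=\; \int_{\mathbb{R}^N} |u_n(x)|^{r(x)}\, J_n(x)\, dx, \qquad J_n(x)\coloneqq \int_{\mathbb{R}^N}\frac{|u_n(y)|^{r(y)-2}u_n(y)\,v(y)}{|x-y|^{\alpha(x,y)}}\,dy.
\]
Because the integrand of $J_n$ is supported in $\operatorname{supp} v$, local compactness gives strong convergence $|u_n|^{r(\cdot)-2}u_n v \to |u|^{r(\cdot)-2}u\,v$ in $L^{\sigma_\alpha^+}(\operatorname{supp} v)\cap L^{\sigma_\alpha^-}(\operatorname{supp} v)$. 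Then splitting the kernel as $|x-y|^{-\alpha(x,y)}\le |x-y|^{-\alpha^-}+|x-y|^{-\alpha^+}$ and applying Young's inequality for weak Lebesgue spaces (exactly as in the control of the term $\Gamma_n$ in the proof of Lemma \ref{CC}) yields strong convergence $J_n\to J$ in an appropriate Lebesgue-space sum. Meanwhile $\{|u_n|^{r(x)}\}$ is bounded in $L^{\sigma_\alpha^+}(\mathbb{R}^N)\cap L^{\sigma_\alpha^-}(\mathbb{R}^N)$ and converges almost everywhere to $|u|^{r(x)}$, hence weakly. The weak-strong pairing then yields $K'[u_n]v\to K'[u]v$. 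The delicate point is producing a dual Lebesgue space in which $J_n$ converges \emph{strongly} uniformly in $n$, since the variable-exponent HLS integrability budget is consumed by $|u_n|^{r(x)}$; this is precisely the role of the splitting-and-Vitali argument already established in Lemma \ref{CC}, which I would invoke to close the argument.
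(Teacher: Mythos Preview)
Your approach is essentially the same as the paper's, and it is correct. Two minor points: the boundedness of $\{u_n\}$ follows directly from $u_n\rightharpoonup u$ by the uniform boundedness principle, not from Lemma~\ref{boundedness} (which concerns Palais--Smale sequences); and your final paragraph is unnecessary. Once you have $g_n\coloneqq |u_n|^{r(\cdot)-2}u_n v\to |u|^{r(\cdot)-2}u\,v$ strongly in $L^{\sigma_\alpha^+}\cap L^{\sigma_\alpha^-}$ on $\operatorname{supp}v$, the kernel splitting plus weak Young gives $J_n\to J$ strongly in $L^{2N/\alpha^+}+L^{2N/\alpha^-}$, whose dual is exactly $L^{\sigma_\alpha^+}\cap L^{\sigma_\alpha^-}$, where $|u_n|^{r(\cdot)}$ is bounded and weakly convergent; the pairing closes immediately with no recourse to the Vitali argument of Lemma~\ref{CC}.

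For comparison, the paper reverses the roles in the Choquard factorization: it bounds the Riesz-type potential $\Phi_n(\cdot)=\int|u_n(y)|^{r(y)}|x-y|^{-\alpha(x,y)}dy$ in $L^{2N/\alpha^+}+L^{2N/\alpha^-}$ and uses boundedness plus a.e.\ convergence to get $\Phi_n\rightharpoonup\Phi$ weakly on $\operatorname{supp}\varphi$, pairing this against the strong convergence $|u_n|^{r(\cdot)-2}u_n\varphi\to|u|^{r(\cdot)-2}u\varphi$ in $L^{\sigma_\alpha^+}(\operatorname{supp}\varphi)$. Your version, putting the compactly supported factor inside the potential so that strong convergence falls on $J_n$, is arguably cleaner since it avoids the (not entirely obvious) a.e.\ convergence of $\Phi_n$. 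For the $W$-term the paper proceeds more directly than your $B_R$/tail split: it simply observes that $\{W|u_n|^{q_W(\cdot)-2}u_n\}$ is bounded in $L^{q_c(\cdot)/(q_c(\cdot)-1)}(\mathbb{R}^N)$ via the H\"older decomposition $\tfrac{q_c-q_W}{q_c}+\tfrac{q_W-1}{q_c}+\tfrac{1}{q_c}=1$, and then identifies the weak limit by a.e.\ convergence. Both routes work.
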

\begin{proof}
By the local compact embedding, passing to a subsequence, we have $u_n\to u$ a.e. 
In the same way as in \cite{variable_exponent_HLS}, we have
\[
\frac{2}{\sigma_\alpha^{\pm}}+\frac{\alpha^{\pm}}{N}=2
\]
and
\[
\frac{1}{|x-y|^{\alpha(x,y)}}\leq \frac{1}{|x-y|^{\alpha^{+}}}+ \frac{1}{|x-y|^{\alpha^{-}}}.
\]
By the Young's convolution inequality for the weak Lebesgue space, 
\[
\left\| \int_{\mathbb{R}^N}\frac{|u_n(x)|^{r(x)}}{|x-y|^{\alpha(x,y)}}dy\right\|_{L^{2N/\alpha^{+}}+ L^{2N/\alpha^{-}}}\leq C\sup_n(\| |u_n|^{r(x)}\|_{\sigma_\alpha^{+}}+ \| |u_n|^{r(x)}\|_{\sigma_\alpha^{-}})<\infty.
\]
Take any $\varphi\in C_c^\infty (\mathbb{R}^N)$ and $\varepsilon>0$ appropriately small. The boundedness in $L^{2N/\alpha^{+}}+ L^{2N/\alpha^{-}}$ and the almost everywhere convergence implies the weak convergence in $L^{2N/\alpha^{+}}(\operatorname{supp}\varphi)$. By the local compact embedding, up to a subsequence, $|u_n|^{r(x)-1}\to |u|^{r(x)-1}$ in $L^{\frac{r^{+}}{r^{+}-1}\sigma_\alpha^{+}(1-\varepsilon)}(\operatorname{supp}\varphi)$. By the H\"{o}lder's inequality, $|u_n|^{r(x)-2}u_n\varphi\to |u_n|^{r(x)-2}u_n\varphi$ in $L^{\sigma_\alpha^{+}}(\operatorname{supp}\varphi)$. Since $\sigma_\alpha^{+} = \sigma_{\alpha^{+}}=(2N/\alpha^{+})'$, $K'[u_n]\varphi\to K'[u]\varphi$ in $L^1$. By a density argument, we obtain $K'[u_n]\rightharpoonup K'[u]$. This is a result of thinking in terms of ``up to a subsequence'', but since any subsequence of $\{K'[u_n]\}$ has a subsequence that converges to the same limit $K'[u]$ similarly, it converges to this limit without passing to a subsequence consequently. The same also applies below. \par
As for the term including $W$, note that
\[
\frac{1}{\frac{q_c(x)}{q_c(x)-q_W(x)}}+\frac{1}{\frac{q_c(x)}{q_W(x)-1}}+\frac{1}{q_c(x)}=1.
\]
By the H\"{o}lder type inequality, $\{W|u_n|^{q_W(x)-2}u_n\}$ is bounded in $L^{\frac{q_c(x)}{q_c(x)-1}}(\mathbb{R}^N)$. Hence, it has a weakly convergent subsequence. By the almost convergence of $\{u_n\}$, its limit is nothing but $W|u|^{q_W(x)-2}u$. Similarly, $|u_n|^{q_c(x)-2}u_n\rightharpoonup |u|^{q_c(x)-2}u$. 
\end{proof}
\begin{lemma}\label{PS}
There exists $\varepsilon_0>0$ such that if $\varepsilon_W\in (0,\varepsilon_0)$, then $I$ satisfies $(PS)_c$ condition for $c<0$.
\end{lemma}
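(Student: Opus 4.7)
The plan is to extract a strongly convergent subsequence from a given $(PS)_c$ sequence $\{u_n\}$ with $c<0$, provided $\varepsilon_W$ is sufficiently small. By Lemma \ref{boundedness}, $\{u_n\}$ is bounded in $E$, so after passing to a subsequence I may assume $u_n\rightharpoonup u$ weakly in $E$, $u_n\to u$ almost everywhere, and $M(\mathcal{E}[u_n])\to M_0\geq \inf M>0$. The weak limit $u$ is a critical point of $I$: Lemma \ref{weaktoweakcontinuity} handles the $W$, critical Sobolev, and nonlocal Choquard contributions to $I'[u_n]\varphi$ for $\varphi\in C_c^\infty(\mathbb{R}^N)$, and the diffusion term is handled by a Brezis--Lieb splitting for variable exponents together with the convergence $M(\mathcal{E}[u_n])\to M_0$.

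The core step is to apply Lemma \ref{CC} and rule out every atomic weight $(\mu_i,\nu_i,\xi_i)_{i\in\mathcal{I}}$ as well as the escaping parts $(\mu_\infty,\nu_\infty,\xi_\infty)$. Fix a cut-off $\phi_{x_i,\varepsilon}(x)=\phi((x-x_i)/\varepsilon)$ as in the proof of Lemma \ref{CC} and test $I'[u_n]$ against $\phi_{x_i,\varepsilon}u_n$; letting $n\to\infty$ and then $\varepsilon\to 0$, the local compact embedding kills the $V$, $W$, and subcritical Choquard contributions, leaving a local identity of the form $M_0\mu_i\leq \nu_i+\xi_i$ at each $x_i$, where the $\nu_i$ (resp.\ $\xi_i$) term appears only if $x_i$ is a critical point in the sense of (QC) (resp.\ (R)). Combining this identity with \eqref{SK quantity}, \eqref{LK quantity}, and \eqref{SD quantity} yields an absolute lower bound $\kappa_0>0$ on any nonzero $\mu_i$, $\nu_i$, or $\xi_i$, depending only on $N$, $\inf M$, $S_D$, $S_K$, $L_K$, and the exponents, and crucially independent of $\varepsilon_W$. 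An analogous argument with the cut-off $1-\phi(x/R)$ and the relation \eqref{mugeqnuatinfty} handles the escaping parts.

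To close the argument, I rewrite $c=\lim_n (I[u_n]-\beta I'[u_n]u_n)$ for some $\beta\in (1/(2r^{-}),1/\theta_M)$, exactly as in the proof of Lemma \ref{boundedness}, and pass to the limit using Lemma \ref{CC}. Each active concentration point $x_i$ and each nonzero escaping part contributes at least a positive constant $c_1\kappa_0>0$, the correct signs being guaranteed by (M), (V) and the choice of $\beta$; the only negative contribution is the $W$-term, bounded via H\"{o}lder by $C\varepsilon_W\|u_n\|^{q_W^{+}}$, which is uniformly bounded by Lemma \ref{boundedness}. Hence $c\geq c_1\kappa_0-C'\varepsilon_W$, and choosing $\varepsilon_0$ so that $C'\varepsilon_0<c_1\kappa_0$ forces $c\geq c_1\kappa_0/2>0$, contradicting $c<0$. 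Therefore $\mathcal{I}=\emptyset$ and every escaping part vanishes, so $u_n\to u$ strongly in $L^{q_c(x)}(\mathbb{R}^N)$ and in the Choquard integral. Strong convergence in $E$ then follows by an $(S_{+})$-type argument: testing $I'[u_n]-I'[u]$ against $u_n-u$, invoking the monotonicity inequality for the fractional $p(x)$-Laplacian, and using $M_0\geq \inf M>0$.

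The main obstacle I expect is the derivation of the local identity $M_0\mu_i\leq \nu_i+\xi_i$ at each concentration point. The cut-off $\phi_{x_i,\varepsilon}u_n$ generates commutator-type error terms in the double Gagliardo integral whose vanishing requires exploiting the log-H\"{o}lder regularity of $s(\cdot)$ and $p(\cdot)$ together with the local compact embedding and the variable-exponent Brezis--Lieb splitting already used in the proof of Lemma \ref{CC}; one must also take care that the possibly coinciding critical growths in (QC) and (R) interact correctly at the same point $x_i$. Once this identity is in hand, the quantitative estimates of Lemma \ref{CC} and the smallness of $\varepsilon_W$ make the final contradiction essentially a bookkeeping exercise.
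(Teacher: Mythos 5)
Your proposal follows essentially the same route as the paper: apply Lemma \ref{CC}, test $I'[u_n]$ against $\phi_{x_i,\varepsilon}u_n$ and $1-\phi(\cdot/R)$ to get the local/at-infinity identities, use the quantitative relations to produce an $\varepsilon_W$-independent threshold for any nonzero concentration mass, and then use $c<0$ together with the smallness of the $\varepsilon_W W$-term to rule out all concentration, after which strong convergence follows. Two small remarks: your early assertion that the weak limit $u$ is already a critical point of $I$ is premature (the Kirchhoff coefficient converges only to some $M_0$, and identifying $M_0=M(\mathcal{E}[u])$ as well as passing to the limit in the nonlinear diffusion term both need the strong convergence obtained later), but you never actually use that claim; and for the final step the paper invokes uniform convexity of $E$ together with the modular--norm equivalence (or, in the Remark, an argument via $I'[u]u=0$), whereas you use an $(S_+)$-type monotonicity argument --- both are standard and equivalent here.
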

\begin{proof}
Let $\{u_n\}$ be an arbitrary $(PS)_c$ sequence for $I$. By Lemma \ref{boundedness}, $\{u_n\}$ is bounded in $W^{s(x,y),p(x,y)}_V$. By the reflexivity of $W^{s(x,y),p(x,y)}_V$ or $W^{s(x,y),p(x,y)}$ and the Banach–Alaoglu theorem for the space consisting of all finite Radon measures, passing to a subsequence, we may assume that the assumption of Lemma \ref{CC} holds. Take $\phi\in C_c^\infty(\mathbb{R}^N)$ such that $\chi_{B_1(0)}\leq \phi\leq \chi_{B_2(0)}$ and define $\phi_{a,\varepsilon}(x)\coloneqq\phi((x-a)/\varepsilon)$. 
For each $i\in\mathcal{I}$ and $\varepsilon>0$, since $\{\phi_{x_i,\varepsilon}u_n\}$ is bounded, we have
\[
\lim_{n\to\infty}I'[u_n](\phi_{x_i,\varepsilon}u_n)=0.
\]
Taking $\varepsilon\to +0$, we get $0\geq \mu_i-\xi_i-\nu_i$. By \eqref{LK quantity} and \eqref{SD quantity} in Lemma \ref{CC}, $\nu_i+\nu_i^{2/\sigma_\alpha^{+}}\geq C\nu_i^{p(x_i)/p_s^*(x_i)}\geq C\min\{\nu_i^{1-\frac{p^{-}s^{-}}{N}}, \nu_i^{1-\frac{p^{+}s^{+}}{N}}\}$ and thus there exists a constant $\Lambda_{\text{loc}}$ depending only on embedding constants such that $\nu_i=0$ or $\nu_i\geq \Lambda_{\text{loc}}$ ($\forall i\in\mathcal{I}$). In particular, $\mathcal{I}$ is a finite set since $\nu$ and $\xi$ are finite measures. \par
Next, we consider the escaping parts. Letting $\eta_R(x)\coloneqq 1-\phi(x/R)$, from $\displaystyle\lim_{R\to\infty}\lim_{n\to\infty}I'[u_n](\eta_R u_n)=0$, we get
\begin{equation}\label{infty quantity}
\mu_\infty+V_\infty=\xi_\infty+\nu_\infty,
\end{equation}
where
\[
V_\infty\coloneqq \lim_{R\to\infty}\limsup_{n\to\infty}\int_{\{|x|>R\}}V|u_n|^{p(x)}dx.
\]
Noting that for any $\varepsilon>0$, there exists $R(\varepsilon)>0$ such that $|r(x)-r_\infty|<\varepsilon$ ($|x|>R(\varepsilon)$), we get
\begin{align*}
L_K \xi_\infty & \leq C \lim_{R\to\infty}\limsup_{n\to\infty}( \|u_n^{r(x)}\|_{\sigma_\alpha^{+}} \|u_n^{r(x)}\eta_R\|_{\sigma_\alpha^{+}}+ \|u_n^{r(x)}\|_{\sigma_\alpha^{-}} \|u_n^{r(x)}\eta_R\|_{\sigma_\alpha^{-}}) \\
&\leq C' \lim_{R\to\infty}\limsup_{n\to\infty}\max_{\pm_1,\pm_2}\|u_n\|_{r(x)\sigma_\alpha^{\pm_1}}^{r^{\pm_2}} \max_{\pm}\|u_n^{r(x)}\eta_R\|_{\sigma_\alpha^{\pm}}\\
&\leq C'\max_{\pm_1,\pm_2}\sup_n\|u_n\|_{r(x)\sigma_\alpha^{\pm_1}}^{r^{\pm_2}}  \lim_{R\to\infty}\limsup_{n\to\infty} \max_{\pm}\|u_n^{p\cdot\frac{r\theta_{\pm}}{p}}u_n^{p_s^*\cdot\frac{r(1-\theta_{\pm})}{p_s^*}}\eta_R\|_{\sigma_\alpha^{\pm}}\\
&\leq C''\max_{\pm_1,\pm_2}\sup_n\|u_n\|_{r(x)\sigma_\alpha^{\pm_1}}^{r^{\pm_2}}  \\
&\phantom{=}\quad\times\lim_{R\to\infty}\limsup_{n\to\infty} \max_{\pm_1,\pm_2,\pm_3}\|u_n\eta_R\|_{p(x)}^{(r\theta_{\pm_1})|_{B_R (0)^c}^{\pm_2}} \|u_n\eta_R\|_{q_c(x)}^{(r\cdot(1-\theta_{\pm_1}))|_{B_R (0)^c}^{\pm_3}}\\
&= C''\max_{\pm_1,\pm_2}\sup_n\|u_n\|_{r(x)\sigma_\alpha^{\pm_1}}^{r^{\pm_2}}  \\
&\phantom{=}\quad\times\lim_{R\to\infty}\limsup_{n\to\infty} \max_{\pm}\|u_n\eta_R\|_{p(x)}^{r_\infty\theta_{\pm,\infty}} \|u_n\eta_R\|_{q_c(x)}^{r_\infty(1-\theta_{\pm,\infty})}\\
&\leq C'''\lim_{R\to\infty}\limsup_{n\to\infty} \max_{\pm}\left(\frac{V_\infty}{\tau_0}\right)^{r_\infty\theta_{\pm,\infty}/p_\infty} \nu_\infty^{r_\infty(1-\theta_{\pm,\infty})/(q_c)_\infty}
\end{align*}
for $\theta_{\pm}\in C(\mathbb{R}^N;[0,1])$ such that 
\[
\frac{1}{r(x)\sigma_\alpha^{\pm}}=\frac{\theta_{\pm}(x)}{p(x)}+\frac{1-\theta_{\pm}(x)}{q_c(x)}
\]
and $\theta_{\pm,\infty}\in [0,1]$ such that 
\[
\frac{1}{r_\infty\sigma_\alpha^{\pm}}=\frac{\theta_{\pm,\infty}}{p_\infty}+\frac{1-\theta_{\pm,\infty}}{(q_c)_\infty},
\]
where $\displaystyle\max_{\pm_1,\pm_2}A_{\pm_1,\pm_2}$ denotes $\max\{A_{+,+},A_{+,-},A_{-,+},A_{-,-}\}$ and we also define $\displaystyle\max_{\pm_1,\pm_2,\pm_3}A_{\pm_1,\pm_2,\pm_3}$ analogously for notational simplicity. Here, the constant $C'''$ depends only on given functions, given exponents and embedding constants. Combing this inequality and \eqref{infty quantity}, by the Young's inequality for products of numbers, for $\varepsilon>0$ small enough, we obtain
\begin{equation}\label{infty quantity 2}
\mu_\infty\leq \mu_\infty+(1-\varepsilon)V_\infty\leq C_\varepsilon\sum_{\pm}\nu_\infty^{\frac{r_\infty(1-\theta_{\pm,\infty})}{(q_c)_\infty}\cdot \frac{p_\infty}{p_\infty-\theta_{\pm,\infty}r_\infty}}+\nu_\infty.
\end{equation}
Note that since a function
\[
\Xi_{\pm} (r_\infty)\coloneqq\frac{r_\infty (1-\theta_{\pm,\infty})}{(q_c)_\infty}\cdot \frac{p_\infty}{p_\infty-\theta_{\pm,\infty}r_\infty}= \frac{r_\infty\sigma_\alpha^{\pm}-p_\infty}{r_\infty \sigma_\alpha^{\pm}-p_\infty \sigma_\alpha^{\pm} +(q_c)_\infty (\sigma_\alpha^{\pm}-1)}
\]
is monotonically increasing with respect to $r_\infty$ (considered to be a variable temporarily) and $\Xi_\pm (p_\infty)=p_\infty/(q_c)_\infty$, we always have $\Xi_\pm (r_\infty)> p_\infty/(q_c)_\infty$ ($\because r_\infty>p_\infty$). This, together with \eqref{infty quantity} and \eqref{mugeqnuatinfty}, implies that there exists $\Lambda_\infty>0$ depending only on embedding constants such that $\nu_\infty=0$ or $\nu_\infty >\Lambda_\infty$. \par
From \eqref{bounded} and $c<0$, we have $\displaystyle \sup_n\|u_n\|\leq 
C \max\{\varepsilon_W^{\frac{1}{p^{+}-q_W^{-}}}, \varepsilon_W^{\frac{1}{p^{-}-q_W^{+}}}\}$. On the other hand, for $n$ sufficiently large,
\begin{equation}\label{smallness}
\begin{split}
0 &>c+o_n(\|u_n\|) \\
&=I[u_n]-\beta I'[u_n]u_n \\
&\geq C_1 (\mathcal{E}[u_n]+K'[u_n]u_n +\||u_n|^{q_c(x)}\|_1)-C_2\varepsilon_W\|W|u_n|^{q_W(x)}\|_1 \\
&\geq C_1\left(\min_{i\in\mathcal{I}}\nu_i+\nu_\infty\right) -C_3\varepsilon_W\max\{\|u_n\|^{q_W^{+}}, \|u_n\|^{q_W^{-}}\}+o_n (1) \\
&\geq C_1\left(\min_{i\in\mathcal{I}}\nu_i+\nu_\infty\right) -C_4 \max\{\varepsilon_W^{1+\frac{q_W^{+}}{p^{+}-q_W^{-}}}, \varepsilon_W^{1+\frac{q_W^{-}}{p^{-}-q_W^{+}}}\}+o_n (1).
\end{split}
\end{equation}
Let $\varepsilon_W<1$. Suppose $\nu_i\geq \Lambda_{\text{loc}}$ ($\exists i\in\mathcal{I}$) or $\nu_\infty\geq \Lambda_{\infty}$. Then, $\displaystyle\Lambda_{\text{loc}}\leq C_5 \varepsilon_W^{1+\frac{q_W^{-}}{p^{-}-q_W^{+}}}$ or $\displaystyle\Lambda_{\infty}\leq C_5 \varepsilon_W^{1+\frac{q_W^{-}}{p^{-}-q_W^{+}}}$. If $\varepsilon_W$ is sufficiently small, we reach a contradiction. In the following, we consider such $\varepsilon_W$. Then, $\nu_i=0$ ($\forall i\in\mathcal{I}$) and $\nu_\infty=0$. Using $\displaystyle \min_{i\in\mathcal{I}}\mu_i$ and $\mu_\infty$ instead of $\displaystyle \min_{i\in\mathcal{I}}\nu_i$ and $\nu_\infty$ in \eqref{smallness} and noting that \eqref{SD quantity} and \eqref{mugeqnuatinfty} hold, we get $\mu_i=0$ ($\forall i\in\mathcal{I}$) and $\mu_\infty=0$ similarly. By \eqref{LK quantity} in Lemma \ref{CC}, we have $\xi_i=0$ ($\forall i\in\mathcal{I}$). From $\mu_\infty=0$, taking a limit in the Hardy-Littlewood-Sobolev type inequality, we have $\xi_\infty=0$. By \eqref{infty quantity}, we also obtain $V_\infty=0$. \par
Now, from $V_\infty=0$, we know that $\{V|u_n|^{p(x)}\}$ is tight. Moreover, since $\displaystyle\sup_{n}\||u_n|^{p(x)}\|_{q_c(x)/p(x)}<\infty$, for any bounded domain $\Omega$, we have
\[
\int_{\Omega}V |u_n|^{p(x)}dx\leq C\|V\|_{L^{\frac{q_c(x)}{q_c(x)-p(x)}}(\Omega)}.
\]
Therefore, $\{V|u_n|^{p(x)}\}$ has uniformly absolutely continuous integrals. By the Vitali convergence theorem, $\|V|u_n|^{p(x)}\|_1\to \|V|u|^{p(x)}\|_1$. In addition, from $\mu_i=\mu_\infty=0$, we have $\mathcal{E}'[u_n]u_n\to \mathcal{E}'[u]u$. By the uniform convexity of $E$ (or the Brezis-Lieb type splitting property) and by the equivalence of norm convergence and modular convergence, we deduce $u_n\to u$ in $E$.
\end{proof}

\begin{remark}
There is another proof of the last part: From $\mu_i=\mu_\infty=0$, $\nu_i=\nu_\infty=0$ and $\xi_i=\xi_\infty=0$, we have $\mathcal{E}'[u_n]u_n\to \mathcal{E}'[u]u$, $\||u_n|^{q_c}\|_1\to \||u|^{q_c}\|_1$ and $K'[u_n]u_n\to K'[u]u$. In addition, by the Vitali convergence theorem, 
\[
\int_{\mathbb{R}^N}W|u_n|^{q_W(x)}dx\to \int_{\mathbb{R}^N}W|u|^{q_W(x)}dx.
\]
In fact, since for any domain $\Omega$, we have
\begin{align*}
\sup_n\int_\Omega W|u_n|^{q_W(x)}dx\leq C(\sup_n \|u_n\|_{q_c(x)})\max_\pm\left\|W^{\frac{q_c(x)}{q_c(x)-q_W(x)}}\right\|_{L^1(\Omega)}^{1-\frac{q_W^\pm}{(q_c)^mp}},
\end{align*}
$\{W|u_n|^{q_W(x)}\}$ is uniformly integrable. Since modulars $v\mapsto\mathcal{E}'[v]v$ and $v\mapsto \mathcal{E}[v]$ define equivalent norms in the homogeneous space, we also have $\mathcal{E}[u_n] \to \mathcal{E}[u]$. Combining this with weak-to-weak continuity of terms other than the Kirchhoff term of $I'$ (Lemma \ref{weaktoweakcontinuity}), we get $I'[u]=0$. Comparing $I'[u_n]u_n\to 0$ and $I'[u]u=0$, we get $\|V|u_n|^{p(x)}\|_1\to \|V|u|^{p(x)}\|_1$. By the equivalence of norm convergence and modular convergence and by the uniform convexity of $E$, $u_n\to u$ in $E$. \par
Using this method of proof, it is not necessary to show that $V_\infty=0$. Although this alternative method via $I'[u]u=0$ seems to be based on more common ideas, it is more roundabout than the original proof.
\end{remark}

\section{Proof of the main theorem}
In order to obtain a sequence of weak solutions of \eqref{cho}, we utilize a critical point theorem based on the concept of Krasnoselskii genus. 
\begin{definition}
Let $X$ be a Banach space and $\Sigma$ be the set of all the closed subsets of $X\setminus\{0\}$ which are symmetric with respect to the origin of $X$. For $A\in\Sigma$, we define
\[
\gamma(A)\coloneqq \inf\{n\in\mathbb{N}\mid \exists \varphi\in C(A,\mathbb{R}^n)\text{ s.t. }\forall u\in A;\varphi(-u)=-\varphi(u)\}.
\]
$\gamma(A)$ is called the Krasnoselskii genus of $A$. 
Furthermore, we set
\[
\Sigma_n\coloneqq \{A\in\Sigma\mid \gamma(A)\geq n\}.
\]
\end{definition}
As for the properties of genus, see \cite{CriticalValue}. We use the following version of the symmetric mountain pass lemma due to Ambrosetti-Rabinowitz \cite{SymmetricMountainPass}.
\begin{proposition}\label{symmetric mountain pass}
Let $X$ be an infinite-dimensional Banach space. Suppose that even functional $I\in C^1(X;\mathbb{R})$ bounded from below with $I(0)=0$ satisfies the $(PS)_c$ condition for $c<0$. Assume that for each $n\in\mathbb{N}$, there exists $A_n\in \Sigma_n$ such that $\displaystyle \sup_{A_n}I<0$. Then, each $\displaystyle c_n\coloneqq \inf_{A\in\Sigma_n}\sup_A I$ is a critical value of $I$ and $c_n\to 0$ ($n\to\infty$). \par
In particular, there exists a sequence of critical points $u_n\neq 0$ such that $I[u_n]\leq 0$, $u_n\to 0$ in $X$.
\end{proposition}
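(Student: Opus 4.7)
The strategy is the classical Lusternik--Schnirelman genus argument. First I verify that $c_n=\inf_{A\in\Sigma_n}\sup_A I$ is well-defined: $\Sigma_n\neq\emptyset$ because $A_n\in\Sigma_n$ by hypothesis, so $c_n\leq \sup_{A_n} I<0$, while $c_n\geq \inf_X I>-\infty$ by boundedness below. Since $\Sigma_{n+1}\subset\Sigma_n$ we have $c_n\leq c_{n+1}$, so $c_n\nearrow c^{\ast}\in(-\infty,0]$. I then need to prove (i) each $c_n$ is a critical value, (ii) $c^{\ast}=0$, and (iii) the corresponding critical points can be arranged so that $u_n\to 0$ in $X$.

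For (i), I argue by contradiction using the equivariant quantitative deformation lemma, applicable because $I$ is even and $(PS)_{c_n}$ holds (a locally Lipschitz odd pseudo-gradient field exists). If $c_n$ were a regular value, this lemma yields $\varepsilon>0$ and an odd homeomorphism $\eta\colon X\to X$ with $\eta(I^{c_n+\varepsilon})\subset I^{c_n-\varepsilon}$. Choosing $A\in\Sigma_n$ with $\sup_A I<c_n+\varepsilon$, the image $\eta(A)$ is closed (since $\eta$ is a homeomorphism), symmetric (since $\eta$ is odd), and avoids $0$ (since all its $I$-values are negative while $I(0)=0$); invariance of genus under odd homeomorphisms gives $\gamma(\eta(A))\geq\gamma(A)\geq n$, hence $\eta(A)\in\Sigma_n$. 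Then $c_n\leq\sup_{\eta(A)}I\leq c_n-\varepsilon$, a contradiction.

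For (ii), suppose $c^{\ast}<0$. By $(PS)_{c^{\ast}}$, the set $K_{c^{\ast}}=\{u:I(u)=c^{\ast},\ I'(u)=0\}$ is compact, symmetric, and disjoint from $0$, so $k\coloneqq\gamma(K_{c^{\ast}})<\infty$, and by continuity of genus there is a symmetric open neighborhood $U\supset K_{c^{\ast}}$ with $\gamma(\overline{U})=k$. The equivariant deformation lemma applied near $c^{\ast}$ yields $\varepsilon>0$ and an odd homeomorphism $\eta$ with $\eta(I^{c^{\ast}+\varepsilon}\setminus U)\subset I^{c^{\ast}-\varepsilon}$. Taking $n$ large enough that $c_n>c^{\ast}-\varepsilon$ and $A\in\Sigma_n$ with $\sup_A I\leq c^{\ast}+\varepsilon$, subadditivity $\gamma(A)\leq\gamma(\overline{A\setminus U})+\gamma(\overline{U})$ gives $\gamma(\overline{A\setminus U})\geq n-k$, so $\eta(\overline{A\setminus U})\in\Sigma_{n-k}$ with $\sup I\leq c^{\ast}-\varepsilon$. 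Hence $c_{n-k}\leq c^{\ast}-\varepsilon$, which contradicts $c_{n-k}\nearrow c^{\ast}$ as $n\to\infty$. Therefore $c^{\ast}=0$.

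For (iii), each $c_n<0$ is a critical value, so there exists $u_n$ with $I'[u_n]=0$ and $I[u_n]=c_n$; since $I(0)=0>c_n$ we have $u_n\neq 0$ and $I[u_n]\leq 0$ with $c_n\to 0$. The delicate point, and the main obstacle, is the strong convergence $u_n\to 0$ in $X$, because $(PS)$ is not assumed at level $0$. I plan to address it \emph{à la} Kajikiya: first establish the multiplicity statement that if $c_n=c_{n+1}=\cdots=c_{n+m}$ then $\gamma(K_{c_n})\geq m+1$, so a plateau of $c_n$'s yields arbitrarily many critical points on a single compact set from which one extracts a subsequence tending to $0$; in the strictly increasing regime, select $u_n$ as the terminal point of an odd pseudo-gradient deformation starting from admissible families lying inside shrinking balls $B_{\rho_n}(0)$ (possible because the $A_n$'s can be scaled while preserving $\sup_{A_n}I<0$ and genus $\geq n$), which forces $\|u_n\|\to 0$ without appealing to $(PS)_0$.
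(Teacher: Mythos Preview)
The paper does not supply its own proof of this proposition: it is stated as a known abstract critical-point theorem with a citation (the clause $u_n\to 0$ in $X$ is in fact a Kajikiya/Heinz-type refinement rather than the original Ambrosetti--Rabinowitz result), so there is no in-paper argument to compare against. Your arguments for (i) that each $c_n$ is a critical value and (ii) that $c_n\nearrow 0$ follow the standard genus/equivariant-deformation route and are correct.

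Part (iii), however, has a real gap. In the plateau case, a plateau $c_n=\dots=c_{n+m}$ forces $\gamma(K_{c_n})\ge m+1$, so $K_{c_n}$ is infinite; but any accumulation point of the compact set $K_{c_n}$ lies in $K_{c_n}$ itself, hence at level $c_n<0$. This yields infinitely many critical points clustering \emph{inside $K_{c_n}$}, not near $0$. In the strictly-increasing case, your scaling claim (that the $A_n$'s can be shrunk into $B_{\rho_n}(0)$ while keeping $\sup_{A_n}I<0$) is not granted by the hypotheses: the proposition assumes only the existence of \emph{some} $A_n\in\Sigma_n$ with negative supremum, and for a general even $C^1$ functional a dilation $tA_n$ need not remain in $\{I<0\}$. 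Even if it did, the pseudo-gradient flow can carry points arbitrarily far before they reach a critical point, so starting inside a small ball does not pin down where the flow terminates. The conclusion $u_n\to 0$ without assuming $(PS)_0$ genuinely requires a further idea---in Kajikiya's proof one argues by contradiction that if $0$ were isolated among critical points with $I\le 0$, then an equivariant deformation of a punctured neighbourhood of $0$, combined with the genus structure of the sublevel sets $I^{c_n}$, would contradict $c_n\to 0$---and that mechanism is not present in your outline.
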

However, $I$ is not bounded from below, so we consider a truncated functional. We are now in a position to prove our main theorem.
\begin{proof}
We observe that
\begin{align*}
I[u]&\geq C_1\varrho_E(u)-C_2\max\{\|u\|^{2r^{+}}, \|u\|^{2r^{-}}\}\\
&\phantom{=}-C_3\max\{\|u\|^{(q_c)^{+}}, \|u\|^{(q_c)^{-}}\}-\varepsilon_W\cdot C_4\max\{\|u\|^{(q_W)^{+}}, \|u\|^{(q_W)^{-}}\} \\
&\geq \ell_\varepsilon (\varrho_E(u)),
\end{align*}
where
\begin{align*}
\ell_\varepsilon (t)&\coloneqq \min \{
C_1 t-C_2 t^{2r^{+}/p^{-}} -C_3 t^{(q_c)^{+}/p^{-}}-\varepsilon\cdot C_4 t^{(q_W)^{+}/p^{-}}, \\
&\phantom{\coloneqq \min\{\}} C_1 t-C_2 t^{2r^{-}/p^{+}} -C_3 t^{(q_c)^{-}/p^{+}}-\varepsilon\cdot C_4 t^{(q_W)^{-}/p^{+}}
\}.
\end{align*}
Since $(q_W)^{+}/p^{-}<1<\min\{2r^{-}/p^{+},(q_c)^{-}/p^{+}\}$, there exists $\varepsilon_0'>0$ so small that for any $\varepsilon\in (0,\varepsilon_0')$, there exist $t_{\varepsilon,0}$ and $t_{\varepsilon,1}$ with $0<t_{\varepsilon,0}<t_{\varepsilon,1}$ such that $\ell_\varepsilon (t)<0$ for $t\in (0, t_{\varepsilon,0})$, $\ell_\varepsilon (t)>0$ for $t\in (t_{\varepsilon,0}, t_{\varepsilon,1})$, $\ell_\varepsilon (t) <0$ for $t> t_{\varepsilon,1}$. To see this, it suffices to consider a small perturbation of the graph of $\ell_0$. \par
Let $\psi\in C^\infty(\mathbb{R})$ be such that $\psi(s)=1$ for $s\in [0, t_{\varepsilon_W,0})$ and $\psi(s)=0$ for $s> t_{\varepsilon_W,1}$. Define the truncated functional
\begin{align*}
\tilde{I}[u]&=\mathcal{M}(\mathcal{E}[u])+ \int_{\mathbb{R}^N} \frac{1}{p(x)}V(x)|u|^{p(x)}dx \\
&\quad\quad- \psi(\varrho_E(u))\int_{\mathbb{R}^N} \int_{\mathbb{R}^N} \frac{|u(x)|^{r(x)} |u(y)|^{r(y)}}{2r(x)|x-y|^{\alpha(x,y)}}dxdy-\psi(\varrho_E(u))\int_{\mathbb{R}^N} \frac{1}{q_c(x)}|u|^{q_c(x)}dx \\
&\quad\quad -\varepsilon_W \psi(\varrho_E(u))\int_{\mathbb{R}^N} \frac{1}{q_W(x)} W(x)|u|^{q_W(x)}dx.
\end{align*}
Then, $\tilde{I}$ is even, bounded from below and satisfies $(PS)_c$ condition for any $c<0$ and for any $\varepsilon_W\in (0,\min\{\varepsilon_0,\varepsilon_0'\})$ thanks to Lemma \ref{PS}. In fact, if $\tilde{I}[u]<0$, then since it necessarily holds that $\psi(\varrho_E(u))>0$, we get $\varrho_E(u) < t_{\varepsilon_W,1}$. Moreover, $\ell_{\varepsilon_W}(\varrho_E(u))<0$ implies $\varrho_E(u) < t_{\varepsilon_W,0}$ and thus $\tilde{I}[u]=I[u]$. \par
Next, let us show that for any $n\in\mathbb{N}$, there exists $\delta_n<0$ such that $\gamma(\tilde{I}^{-1}((-\infty,\delta_n]))\geq n$. 
Take a $n$-dimensional subspace $X_n$ of $E$. For each $u\in X_n\setminus\{0\}$, write $u=\rho v$ with $\rho\coloneqq \|u\|$. Since $X_n\cap \{\|u\|=1\}$ is compact, there exist $d_n,e_n>0$ such that 
\[
\sup_{v\in X_n\cap \{\|u\|=1\}}\left(\mathcal{M}(\mathcal{E}[v])+ \int_{\mathbb{R}^N} \frac{1}{p(x)}V(x)|v|^{p(x)}dx\right)\leq d_n,
\]
\[
\inf_{v\in X_n\cap \{\|u\|=1\}}\int_{\mathbb{R}^N}\frac{1}{q_W(x)}W(x)|v|^{q_W(x)}dx\geq e_n.
\]
For $0<\rho<\min\{t_{\varepsilon_W,0}^{1/p^{+}}, t_{\varepsilon_W,0}^{1/p^{-}}, 1\}$ and $\|v\|=1$, we can estimate $\tilde{I}$ as follows. 
\begin{align*}
\tilde{I}[\rho v]&\leq \rho^{p^{-}}\mathcal{M}(\mathcal{E}[v])+ \rho^{p^{-}}\int_{\mathbb{R}^N} \frac{1}{p(x)}V(x)|v|^{p(x)}dx \\
&\quad - \rho^{2r^{+}}\int_{\mathbb{R}^N} \int_{\mathbb{R}^N} \frac{|v(x)|^{r(x)} |v(y)|^{r(y)}}{2r(x)|x-y|^{\alpha(x,y)}}dxdy-\rho^{(q_c)^{+}}\int_{\mathbb{R}^N} \frac{1}{q_c(x)}|v|^{q_c(x)}dx \\
&\quad -\rho^{(q_W)^{+}}\varepsilon_W\int_{\mathbb{R}^N} \frac{1}{q_W(x)} W(x)|v|^{q_W(x)}dx \\
&\leq C_1 d_n \rho^{p^{-}}-C_2 e_n \rho^{(q_W)^{+}} \eqqcolon \delta_n.
\end{align*}
Here, although $C_2$ depends on $\varepsilon_W$, since we now consider fixed $\varepsilon_W$, there is no problem. 
Since $q_W^{+}<p^{-}$, there exists $\rho>0$ so small that the right hand side is strictly negative, that is, $\delta_n<0$. Now we have $X_n\cap\{\|u\|=\rho\}\subset \tilde{I}^{-1}((-\infty,\delta_n])$. Consequently, by the monotonicity of Krasnoselskii genus
\[
\gamma(\tilde{I}^{-1}((-\infty,\delta_n]))\geq \gamma(X_n\cap\{\|u\|=\rho\})=\gamma (S^{n-1})=n.
\]
Namely, $\tilde{I}^{-1}((-\infty,\delta_n]) \in\Sigma_n$. Hence, we obtain
\[
c_n\coloneqq\inf_{A\in\Sigma_n}\sup_{A} \tilde{I}\leq\delta_n<0.
\]
By Proposition \ref{symmetric mountain pass}, $\{c_n\}$ is a sequence of critical values of $\tilde{I}$. 
\end{proof}


\end{document}